\let\origsection\section
\renewcommand\section{\@ifstar{\starsection}{\nostarsection}}
\newcommand\nostarsection[1]
\sectionprelude\origsection{#1}\sectionpostlude}
\newcommand\starsection[1]
\newcommand\sectionprelude{%
  \vspace{1em}
}
\newcommand\sectionpostlude{}
\newcommand{\poin}{\mathcal{P}}
\newcommand{\xys}{S}
\newcommand{\XX}{\mathcal{X}}
\newcommand{\YY}{\mathcal{Y}}
\newcommand{\ZZ}{\mathcal{Z}}
\DeclareMathOperator{\im}{im}
\DeclareMathOperator{\id}{id}
\DeclareMathOperator{\Sym}{\mathrm{Sym}}
\newtheorem{proposition}{Proposition}[section]
\newtheorem{thm}[proposition]{Theorem}
\newtheorem{lemma}[proposition]{Lemma}
\newtheorem{corollary}[proposition]{Corollary}
\theoremstyle{definition}
\newtheorem{defi}[proposition]{Definition}
\theoremstyle{remark}
\newtheorem*{remark}{Remark}
\title{Betti numbers of configuration spaces of surfaces}
\author{Gabriel C. Drummond-Cole}
\thanks{The first author was supported by IBS-R003-D1}
\address{Center for Geometry and Physics, Institute for Basic Science (IBS), Pohang 37673, Republic of Korea}
\author{Ben Knudsen}
\thanks{The second author was supported by NSF award 1606422}
\address{Department of Mathematics, Harvard University, Cambridge 02138, USA}
\subjclass[2010]{55R80, 57N65, 17B56}
\begin{document}
\begin{abstract}
We give explicit formulas for the Betti numbers, both stable and unstable, of the unordered configuration spaces of an arbitrary surface of finite type.
\end{abstract}
\maketitle
\section{Introduction}
In this paper, we study the configuration space of $k$ unordered points in a surface $\Sigma$ of finite type, which is defined as the quotient
\[B_k(\Sigma) =\{(x_1,\ldots, x_k)\in \Sigma^k | x_i\ne x_j \text{ if } i\neq j\}/S_k,\] where $S_k$ denotes the symmetric group on $k$ letters. Our goal is the explicit computation of the Betti number $\beta_i(B_k(\Sigma))$ for every choice of $i$, $k$, and $\Sigma$. This computation is divided among the following results according to the nature of $\Sigma$:\vspace{.1in}

\begin{center} 
{\def\arraystretch{1.2}
\begin{tabular}{  c  c  }  
Type of surface & Computation
\\
\hline Closed nonorientable & Proposition \ref{prop:closed nonorientable} \\ 
Open nonorientable & Proposition \ref{prop: open nonorientable} \\ 
Open orientable & Proposition \ref{prop: punctured surface Betti numbers} \\ 
Closed orientable & Corollaries \ref{cor:unstable k+1}, \ref{cor: second unstable betti number}, and \ref{cor: stable numbers}\\
\end{tabular} 
}\end{center}\vspace{.1in}

Some of these computations are known. The case of the plane is treated by Arnold in \cite{Arnold:CRCBG}, the case of a once-punctured orientable surface by B\"{o}digheimer--Cohen in \cite{BodigheimerCohen:RCCSS}, the case of the sphere by Salvatore in \cite{Salvatore:CSSHLS} (see also \cite{RandalWilliams:TCHCSS}), the case of the real projective plane by Wang in \cite{Wang:OBGRP2}, and the case of a general closed nonorientable surface by the second author in \cite{Knudsen:BNSCSVFH}. In addition, during the writing of this work, the authors learned of two independent computations in the case of the torus, due to Maguire \cite{Maguire:CCCS} and Schiessl \cite{Schiessl:BNUCST}, respectively. We include all of these computations below for the sake of completeness.

The principal contribution of this work is the case of a general closed surface $\Sigma_g$ of genus $g$. In this most difficult case, the resulting formulas are rather complicated, and we complement them with two asymptotic results characterizing the behavior of $\beta_i(B_k(\Sigma_g))$ viewed alternately as a function of $i$ or of $g$. These results appear below as Corollaries \ref{cor:genus stability} and \ref{cor: fixed g polynomials}. According to unpublished work of Dan Petersen, the former result, which one might characterize as a kind of ``homological stability in genus,'' has a conceptual explanation in terms of representation stability for the symplectic groups.

In order to describe our approach, we recall a general method for understanding the rational homology of configuration spaces of manifolds. To a manifold $M$, there corresponds a graded Lie algebra $\mathfrak{g}_M$ built from the cohomology of $M$, and the graded vector space $H_*(B_k(M);\mathbb{Q})$ coincides with a particular summand of the Lie algebra homology of $\mathfrak{g}_M$. This Lie algebra homology may be computed by means of the \emph{Chevalley--Eilenberg complex} (see Definition \ref{defi: CE complex}), and the work of this paper consists in the application of algebraic and combinatorial techniques to this complex.

The Chevalley--Eilenberg complex has been a ubiquitous presence in the study of the unordered configuration spaces of manifolds (as well as the ordered---see \cite{Getzler:RMHMOCS}). Prominent examples of its appearance include the work of B\"{o}digheimer--Cohen--Taylor \cite{BodigheimerCohenTaylor:OHCS}, B\"{o}digheimer--Cohen \cite{BodigheimerCohen:RCCSS}, and F\'{e}lix--Thomas \cite{FelixThomas:RBNCS}, building on McDuff's foundational work \cite{McDuff:CSPNP}; and the work of F\'{e}lix--Tanr\'{e} \cite{FelixTanre:CAUCS} following Totaro \cite{Totaro:CSAV}. In order to make the identification with Lie algebra homology, each of these works requires assumptions about the background manifold---orientability, for example. More recently, the identification was established in full generality by the second author in \cite{Knudsen:BNSCSVFH} using the theory of \emph{factorization homology} developed by Ayala--Francis \cite{AyalaFrancis:FHTM} and Lurie \cite[Ch. 5]{Lurie:HA}, among others.

Although the Chevalley--Eilenberg complex has appeared widely in the past, it has often been the practice to deal with it one subcomplex at a time---to treat separately the computations for the configuration spaces of five and of twenty points, say. Our approach is to treat the complex as a whole, performing the computation for $B_k(\Sigma)$ simultaneously for all $k$. It is this simultaneity that renders the computations feasible in practice.

\subsection*{Acknowledgements}
The first author thanks Christoph Schiessl for helpful conversation. The second author thanks Jordan Ellenberg, Benson Farb, and Joel Specter. 

\section{Recollections}

\subsection{General conventions}\label{subsec: conventions}

We work throughout with graded vector spaces or (co)chain complexes over the ground ring $\mathbb{Q}$. Degree is understood homologically; that is, the differential of a chain complex decreases degree while that of a cochain complex increases degree. In addition, our chain complexes will often carry an auxiliary grading, called \emph{weight}. Degree is generically indexed by $i$ and weight by $k$. We denote the dimension of the degree $i$ summand of a graded vector space $V$ by $\dim_i$. If $V$ is weighted, we denote the dimension of the degree $i$ and weight $k$ summand by $\dim_{i,k}V$. For $X$ a topological space, the Betti number $\beta_i(X)$ is $\dim_i H(X;\mathbb{Q})$.

The $n$th \emph{suspension} of the graded vector space $V$ is the graded vector space $V[n]$ with $V[n]_i=V_{i-n}$, and the element of $V[n]$ corresponding to $x\in V$ is denoted $\sigma^nx$. Vector spaces are identified with graded vector spaces concentrated in degree 0; for example, \[\mathbb{Q}[n]_i=\begin{cases}
\mathbb{Q}&\quad i=n\\
0&\quad \text{else.}
\end{cases}\] The degree of a homogeneous element $x$ is written $|x|$.

Graded vector spaces will typically be finite dimensional in each degree. In this situation, it is convenient to collect the dimensions of the summands of $V$ into its \emph{Poincar\'{e} series} \[
\poin_V(t)=\sum_{n\in\mathbb{Z}} \dim_i(V)t^i.
\] For example, we have the equalities \begin{align*}\poin_{\Lambda[x]}(t)&=1+t^{|x|}\\
\poin_{\mathbb{Q}[x]}(t)&=\sum_{n\geq0}t^{n|x|}=\frac{1}{1-t^{|x|}}, \quad |x|\neq0.
\end{align*} Poincar\'{e} series are additive under direct sum and, modulo convergence issues for unbounded complexes, multiplicative under tensor product. Moreover, for a complex $(V,d)$, we have the equality
\[
\poin_{H(V,d)}(t) = \poin_{\ker(d)}(t) + t^{|d|}(\poin_{\ker(d)}(t) - \poin_V(t)).\]

The symmetric algebra $\Sym(V)$ is understood in the graded sense; that is \[\Sym(V):=\mathbb{Q}[V_{\mathrm{even}}]\otimes \Lambda[V_{\mathrm{odd}}].\] If $V$ is weighted, then $\Sym(V)$ inherits a weight grading by specifying that the inclusion of $V$ be weight preserving and that weights add under multiplication. Note that it is the homological degree alone that determines whether a homogeneous element is a polynomial or an exterior generator. 

We will typically work with $\Sym(V)$ where $V$ is a weighted complex concentrated in weights 1 and 2. In these situations, we employ the convention that a variable without a tilde has weight one, while a variable with a tilde has weight two.

\subsection{Configuration spaces and Lie algebras}

Recall that a graded Lie algebra is a graded vector space $\mathfrak{g}$ equipped with a linear map $[-,-]:\mathfrak{g}^{\otimes2}\to \mathfrak{g}$ satisfying the graded antisymmetry and graded Jacobi identities.

\begin{defi}\label{defi: CE complex}
Let $\mathfrak{g}$ be a graded Lie algebra. The \emph{Chevalley--Eilenberg complex} of $\mathfrak{g}$ is the chain complex $CE(\mathfrak{g}[1])$ whose underlying graded vector space is $\Sym(\mathfrak{g}[1])$, which carries the structure of the cofree conilpotent cocommutative coalgebra on the graded vector space $\mathfrak{g}[1]$, and whose differential is the unique coderivation $D$ of that coalgebra structure such that \[D(\sigma x\cdot \sigma y)=(-1)^{|x|}\sigma[x,y].\]
\end{defi}

\begin{remark}
The complex $CE(\mathfrak{g})$ computes the Lie algebra homology of $\mathfrak{g}$; the reader interested in more may consult any of many expositions, for instance~\cite[XIII]{CartanEilenberg:HA}, \cite[7]{Weibel:IHA}, or \cite[22]{FelixHalperinThomas:RHT}. We note for the sake of completeness that the differential is given explicitly by the formula \[D(\sigma x_1\cdots \sigma x_n)=\sum_{1\leq i<j\leq n}(-1)^{|x_i|}\epsilon(x_1,\ldots, x_n)\sigma[x_i,x_j]\cdot\sigma x_1\cdots \widehat{\sigma x_i}\cdots \widehat{\sigma x_k}\cdots \sigma x_n,\] where the sign $\epsilon$ is determined by \[\sigma x_1\cdots \sigma x_n=\epsilon(x_1,\dots, x_n)\sigma x_i\cdot\sigma x_j\cdot\sigma x_1\cdots \widehat{\sigma x_i}\cdots \widehat{\sigma x_j}\cdots \sigma x_n.\] We shall not make use of this formula, as a more convenient sign convention, made explicit at the close of the section, is available in the case of interest.
\end{remark}

Now, let $\Sigma$ be a surface of finite type. We write $\mathbb{Q}^w$ for the orientation sheaf of $\Sigma$ and recall that there is an isomorphism $(\mathbb{Q}^w)^{\otimes 2}\cong\mathbb{Q}$ inducing a cup product from twisted to ordinary cohomology. We write $\mathfrak{g}_\Sigma$ for the graded Lie algebra given additively by
\[\mathfrak{g}_\Sigma=H_c^{-*}(\Sigma;\mathbb{Q}^w)[1]\oplus H_c^{-*}(\Sigma;\mathbb{Q})[2],\] where $H_c$ denotes compactly supported cohomology, with the nonzero components of the bracket determined by the cup product according to the equation \[[\sigma\alpha,\sigma\beta]=(-1)^{|\beta|}\sigma^2(\alpha\smile\beta).\]

\begin{remark} If $\Sigma$ is orientable, then $\mathfrak{g}_\Sigma$ is the tensor product of the cohomology of $\Sigma$ and the free graded Lie algebra on a single generator of degree 1, equipped with the canonical Lie algebra structure on the tensor product of a commutative algebra and a Lie algebra. If $\Sigma$ is nonorientable, there is an analogous characterization as the the super tensor product of a commutative superalgebra with a Lie superalgebra.
\end{remark}

The filtration of $\mathfrak{g}_\Sigma$ by bracket length is canonically split, and we regard $\mathfrak{g}_\Sigma$ and thereby $CE(\mathfrak{g}_\Sigma)$ as weight graded according to the induced grading. We have the following result, which we do not state in the greatest possible generality.

\begin{thm}\label{thm:Knudsen}\cite{Knudsen:BNSCSVFH}
There is an equality $$\beta_i(B_k(\Sigma))= \dim_{i,k}H(CE(\mathfrak{g}_\Sigma)).$$
\end{thm}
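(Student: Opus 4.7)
The plan is to realize both sides of the equality as derived invariants of $\Sigma$ via factorization homology, and to identify them through Koszul-style duality for $E_2$-algebras. First, for any $n$-manifold $M$, the factorization homology of $M$ with coefficients in the free $E_n$-algebra on $\mathbb{Q}$ (concentrated in degree zero) splits naturally as
$$\int_M \mathrm{Free}_{E_n}(\mathbb{Q}) \;\simeq\; \bigoplus_{k\geq 0} C_*(B_k(M);\mathbb{Q}),$$
graded by the cardinality $k$. Specialising to $n=2$ and $M=\Sigma$ recovers the left-hand side of the theorem, summed over $k$, so the task becomes to identify this factorization homology with $CE(\mathfrak{g}_\Sigma)$.

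Next I would exploit the formality of the little $2$-disks operad over $\mathbb{Q}$ (Kontsevich, Tamarkin), which provides an equivalence between $E_2$-algebras and Gerstenhaber ($P_2$-)algebras in chain complexes over $\mathbb{Q}$. Under this equivalence the free $E_2$-algebra on $\mathbb{Q}$ corresponds to the free Gerstenhaber algebra on $\mathbb{Q}$, which as a chain complex is $\Sym(L_0)$ for the two-dimensional graded Lie algebra $L_0$ spanned by an odd generator of degree $1$ in weight one and its (nonvanishing) self-bracket of degree $2$ in weight two. The weight and degree structure of $L_0$ exactly matches the ``internal'' structure of $\mathfrak{g}_\Sigma$ before one incorporates the cohomology of $\Sigma$.

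The heart of the argument is then a computation of factorization homology for Gerstenhaber algebras of the form $\Sym(L)$, where $L$ is a shifted graded Lie algebra. Combining the $\otimes$-excision property of factorization homology with a Poincar\'e--Lefschetz duality argument applied to a handle decomposition of $\Sigma$, one produces a natural identification of $\int_\Sigma \Sym(L)$ with the Chevalley--Eilenberg complex of a graded Lie algebra built from $H_c^{-*}(\Sigma;\mathbb{Q}^w)$ and $L$, the bracket combining the cup product on the cohomological factor (using the isomorphism $(\mathbb{Q}^w)^{\otimes 2}\cong \mathbb{Q}$ to absorb the twist in weight two) with the bracket of $L$. Specialising to $L = L_0$, the resulting graded Lie algebra matches $\mathfrak{g}_\Sigma$ summand by summand and bracket by bracket, and extracting the weight-$k$ piece proves the theorem.

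The main obstacle is this last identification. One must verify that the integration procedure produces exactly the Chevalley--Eilenberg differential, with no higher corrections coming from the $E_2$-structure, and---crucially---that the twist by the orientation sheaf $\mathbb{Q}^w$ emerges intrinsically from local Poincar\'e duality in the nonorientable case, compatibly with the chosen trivialisation $(\mathbb{Q}^w)^{\otimes 2}\cong\mathbb{Q}$ that governs the Lie bracket on $\mathfrak{g}_\Sigma$. Once this comparison is established, the weight grading on $CE(\mathfrak{g}_\Sigma)$ is precisely the cardinality grading on configurations and the equality of dimensions follows immediately.
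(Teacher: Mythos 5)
This theorem is not proved in the present paper; it is cited verbatim from \cite{Knudsen:BNSCSVFH}, so the only comparison available is with that source. Your outline does reproduce the high-level strategy there: organize all of the $B_k(\Sigma)$ at once via factorization homology with coefficients in $\mathrm{Free}_{E_2}(\mathbb{Q})$, trade the free $E_2$-algebra for a Lie-theoretic model over $\mathbb{Q}$, and let a Poincar\'e duality principle convert chains on $\Sigma$ into $H_c^{-*}(\Sigma;\mathbb{Q}^w)$. But the specific mechanism you propose and the completeness of the argument diverge from the cited one. The cited proof does not pass through Kontsevich--Tamarkin formality of the little $2$-disks operad; instead it constructs a higher enveloping algebra functor $U_n$ from graded Lie algebras to $E_n$-algebras, shows that $\int_M U_n(\mathfrak{g})$ is the Chevalley--Eilenberg complex of a Lie algebra built from $H_c^{-*}(M;\mathbb{Q}^w)$ and $\mathfrak{g}$ by comparing commutative and Lie Koszul dualities and invoking nonabelian Poincar\'e duality, and then recognizes $\mathrm{Free}_{E_2}(\mathbb{Q})\simeq U_2(L_0)$ with $L_0=\mathrm{FreeLie}(\mathbb{Q}[1])$. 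This route handles precisely the two issues you flag as obstacles: it produces the Chevalley--Eilenberg differential on the nose, with no higher homotopy corrections to control, and, because $U_n$ is manifestly $O(n)$-equivariant, the orientation twist $\mathbb{Q}^w$ falls out automatically once one integrates over an unoriented manifold.

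In your route, both of those points remain genuinely open. Invoking formality raises the further question of whether the zigzag $E_2\simeq P_2$ can be made compatible with the $O(2)$-action (which is needed to integrate over nonorientable $\Sigma$), and a handle-decomposition computation of $\int_\Sigma$ would still have to be carried out with care about signs and higher coherence in the gluings and about how the bracket feeds into the differential; none of this is supplied. You have correctly identified where the content of the theorem lies, but you have not furnished it, so as it stands the proposal is a plausible outline rather than a proof. One smaller slip to correct: the free Gerstenhaber algebra on $\mathbb{Q}$ is $\Sym(L_0[-1])\cong\mathbb{Q}[p]\otimes\Lambda[\tilde p]$ with $p$ in degree $0$, weight $1$ and $\tilde p$ in degree $1$, weight $2$, not $\Sym(L_0)$; as written, $\Sym(L_0)$ does not have the bigraded dimensions of $\bigoplus_k H_*(B_k(\mathbb{R}^2);\mathbb{Q})$.
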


We close this section with a remark on signs. From the definitions, we have \[CE(\mathfrak{g}_\Sigma)=\Sym\big(H_c^{-*}(\Sigma;\mathbb{Q}^w)[2]\oplus H_c^{-*}(\Sigma;\mathbb{Q})[3]\big),\] and $D$ is determined as a coderivation by specifying that 
\[D(\sigma^2\alpha\cdot \sigma^2\beta)=(-1)^{|\alpha|+|\beta|+1}\sigma^3(\alpha\smile\beta).\] All of the calculations below are performed with a differential, also called $D$, that omits the sign $(-1)^{|\alpha|+|\beta|+1}$. This omission is justified by a linear change of variables; indeed, multiplication by $-1$ in the even part of $H_c^{-*}(\Sigma;\mathbb{Q})[3]$ eliminates the sign.

\section{Nonorientable and open surfaces}\label{sec: non-orientable and open surfaces}

In this section, we compute the Betti numbers of $B_k(\Sigma)$, where $\Sigma$ is either nonorientable or both orientable and open. The bulk of the former computation was carried out in \cite{Knudsen:BNSCSVFH} and the bulk of the latter in \cite{BodigheimerCohen:RCCSS}; nevertheless, we include them here for the sake of completeness and as a warmup for the more involved computations to come.

\subsection{Nonorientable surfaces} Let $N_{h}\cong\#_h\mathbb{RP}^2$ denote the nonorientable surface of Euler characteristic $2-h$, and let $N_{h,n}=N_h\setminus S$, where $|S|=n$. Using Poincar\'{e} duality and elementary algebraic topology, one finds that \[
H_c^{-*}(N_{h,n};\mathbb{Q}^w)\cong \mathbb{Q}[-1]^{h+n-1}\oplus \mathbb{Q}[-2]\] and
\[H_c^{-*}(N_{h,n};\mathbb{Q})\cong \begin{cases} \mathbb{Q}\oplus \mathbb{Q}[-1]^{h-1}&\quad n=0\\
\mathbb{Q}[-1]^{h+n-2}&\quad\text{else.}
\end{cases}
\] Thus, the twisted cup product $H_c^{-*}(N_{h,n};\mathbb{Q}^w)^{\otimes 2}\to H_c^{-*}(N_{h,n};\mathbb{Q})$ vanishes for degree reasons, and we conclude that the Chevalley--Eilenberg differential vanishes.

In the closed case $n=0$, we have the following:

\begin{proposition}\label{prop:closed nonorientable}
For any $k\geq0$, \[
\beta_i(B_k(N_{h}))
=\left\{
	\begin{array}{ll}
	\binom{h+i-2}{h-2} + \binom{h+i-5}{h-2}&i\leq k\\
	\binom{h+i-5}{h-2}& i=k+1\\
	0&\text{else}.
	\end{array}
\right.
\]
where $\binom{-1}{-1}\coloneqq 1$.
\end{proposition}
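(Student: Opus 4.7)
The plan is to apply Theorem~\ref{thm:Knudsen} and exploit the fact, already noted just above the statement, that the Chevalley--Eilenberg differential on $\mathfrak{g}_{N_h}$ vanishes. Consequently $H(CE(\mathfrak{g}_{N_h}))$ is isomorphic as a bigraded vector space to the free graded commutative algebra $\Sym(H_c^{-*}(N_h;\mathbb{Q}^w)[2]\oplus H_c^{-*}(N_h;\mathbb{Q})[3])$, and the whole problem reduces to computing the bigraded dimensions of an explicit polynomial-exterior algebra.

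I would first read off the generators from the cohomology computations recorded above the statement, using the tilde conventions of Section~\ref{subsec: conventions} to keep track of weight: there are $h-1$ exterior classes in bidegree $(\mathrm{degree},\mathrm{weight})=(1,1)$, a single polynomial class in $(0,1)$, a single exterior class in $(3,2)$, and $h-1$ polynomial classes in $(2,2)$. Their bigraded Poincar\'{e} series is therefore
\[
\poin(t,s)=\frac{(1+ts)^{h-1}(1+t^{3}s^{2})}{(1-s)(1-t^{2}s^{2})^{h-1}}.
\]

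The essential step is the telescoping identity $(1+x)/(1-x^{2})=1/(1-x)$, applied with $x=ts$: this cancels each weight-one exterior generator against the corresponding weight-two polynomial generator, simplifying the series to
\[
\poin(t,s)=\frac{1+t^{3}s^{2}}{(1-s)(1-ts)^{h-1}}.
\]
Extracting the coefficient of $t^{i}s^{k}$ via the standard expansions $(1-ts)^{-(h-1)}=\sum_{j\geq 0}\binom{h+j-2}{h-2}t^{j}s^{j}$ and $(1-s)^{-1}=\sum_{m\geq 0}s^{m}$ will then produce the first binomial term $\binom{h+i-2}{h-2}$ (from the summand $1$ in the numerator, contributing precisely when $i\leq k$) and the second term $\binom{h+i-5}{h-2}$ (from the shifted numerator $t^{3}s^{2}$, contributing when $3\leq i\leq k+1$), thereby yielding the three cases of the stated formula. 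The only bookkeeping subtlety is the edge case $h=1$, in which two of the generating families above are empty; the convention $\binom{-1}{-1}\coloneqq 1$ is precisely what is needed for the unified formula to continue to reproduce the correct Betti numbers of $B_{k}(\mathbb{RP}^{2})$, namely $\beta_{0}=1$ for all $k\geq 0$ and $\beta_{3}=1$ for $k\geq 2$.
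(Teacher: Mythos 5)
Your proof is correct and the starting point is the same as the paper's (vanishing differential by Theorem~\ref{thm:Knudsen}, reducing to a bigraded dimension count in a free commutative algebra), but your route through the count is genuinely different. The paper peels off the $h$-th pair $(u_{h-1},\tilde u_{h-1})$ to get a recursion $\beta_i(B_k(N_h))=\sum_j \beta_{i-j}(B_{k-j}(N_{h-1}))$ and then closes it by induction on $h$ using the hockey-stick identity $\sum_{j=0}^i\binom{n+j}{n}=\binom{n+i+1}{n+1}$; you instead write the full two-variable Poincar\'e series in closed form and apply the telescoping identity $(1+ts)/(1-t^2s^2)=1/(1-ts)$ once, globally. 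That cancellation is the single algebraic fact driving the paper's recursion, so your approach surfaces the mechanism directly and avoids both the induction and the auxiliary binomial identity. The only thing to be careful about, which you flag correctly, is that coefficient extraction forces the ``second'' binomial $\binom{h+i-5}{h-2}$ to be read as a power-series coefficient (so it vanishes for $i<3$ and for $i>k+1$), and the $h=1$ degenerate case is exactly what the convention $\binom{-1}{-1}\coloneqq 1$ is designed to absorb. In short: both arguments work; yours is arguably cleaner and more directly reveals why the formula has the shape it does.
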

\begin{proof}
Since $\mathfrak{g}_{N_h}$ is Abelian, there is no differential in $CE(\mathfrak{g}_{N_h})$, so the $i$th Betti number of $B_k(N_{h})$ is given by the dimension of the summand of degree $i$ and weight $k$ in
\[
CE(\mathfrak{g}_{N_h})\cong \mathbb{Q}[p,\tilde{u}_1,\ldots, \tilde{u}_{h-1}]\otimes \Lambda[\tilde{v},u_1,\ldots, u_{h-1}]
\]
where $|p|=0$, $|u_j|=1$, $|\tilde{u}_j|=2$, and $|\tilde{v}|=3$ (this is a slight abuse of notation as the $u_j$ classes are from twisted cohomology groups of $N_{h}$ while the $\tilde{u}_j$ classes are from untwisted cohomology groups of $N_{h}$). Decomposing this expression using the elements $\tilde u_{h-1}^ru_{h-1}^s$ for $r\geq0$ and $s\in \{0,1\}$ yields the recursive formula $$\beta_i(B_k(N_h))= \sum_{j=0}^i\dim H_{i-j}(B_{k-j}(N_{h-1});\mathbb{Q})$$ for $h\geq2$, together with the base case $$\beta_i(B_k(N_1))=\begin{cases}
1&\quad i\in\{0,3\}\\
0&\quad\text{else}
\end{cases}$$ for $k>1$. An easy induction on $h$ using the identity
\[
\sum_{j=0}^i \binom{n+j}{n}=\binom{n+i+1}{n+1}
\]
(valid for $n\ge -1$) completes the proof.
\end{proof}

In the punctured case, the calculation is even simpler:

\begin{proposition}\label{prop: open nonorientable}
For any $k\geq0$ and $n\geq1$, \[\beta_i(B_k(N_{h,n}))=\left\{
	\begin{array}{ll}
	\binom{h+n+i-3}{h+n-3} + \binom{h+n+i-4}{h+n-3}&i\leq k\\
	0&\text{else}.
	\end{array}
\right.\]
\end{proposition}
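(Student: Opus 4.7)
The plan is to follow the same strategy as in Proposition \ref{prop:closed nonorientable}, taking advantage of the fact that the Chevalley--Eilenberg differential vanishes (as established in the preamble to Section \ref{sec: non-orientable and open surfaces}). Using the cohomology calculations stated there, I would identify
\[CE(\mathfrak{g}_{N_{h,n}})\cong \mathbb{Q}[p,\tilde{u}_1,\ldots,\tilde{u}_{h+n-2}]\otimes \Lambda[u_1,\ldots,u_{h+n-1}],\]
with $|p|=0$, $|u_j|=1$, and $|\tilde{u}_j|=2$, where $p$ and the $u_j$'s have weight one and the $\tilde{u}_j$'s have weight two. The structural difference from the closed case is the absence of the top-degree class $\tilde{v}$, reflecting the vanishing of $H_c^0(N_{h,n};\mathbb{Q})$ when $n\geq 1$.

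Rather than induct on $h+n$, I would compute the bigraded Poincar\'{e} series directly. Writing $s$ for weight and $t$ for degree, each generator contributes a geometric or exterior factor according to parity of degree, yielding
\[\poin(t,s)=\frac{(1+st)^{h+n-1}}{(1-s)(1-s^2t^2)^{h+n-2}}.\]
The key observation is that $1-s^2t^2=(1-st)(1+st)$, so the factor $(1+st)^{h+n-2}$ in the denominator cancels all but one copy of $(1+st)$ in the numerator, collapsing the expression to
\[\poin(t,s)=\frac{1+st}{(1-s)(1-st)^{h+n-2}}.\]

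From here the coefficient extraction is routine: expanding $(1-st)^{-(h+n-2)}$ and $(1-s)^{-1}$ as geometric series and matching monomials, the coefficient of $s^kt^i$ is $\binom{h+n+i-3}{i}+\binom{h+n+i-4}{i-1}$ when $i\leq k$ (the inequality arising from the nonnegativity of the exponent of $s$ coming from the $(1-s)^{-1}$ factor) and vanishes otherwise. Converting to the complementary binomial coefficients via $\binom{a}{b}=\binom{a}{a-b}$ gives exactly the stated formula, with the edge case $i=0$ handled by the convention that $\binom{h+n-4}{h+n-3}=0$. I do not anticipate any substantial obstacle; the proof is purely generating-function bookkeeping, and the telescoping cancellation is the only step requiring any cleverness.
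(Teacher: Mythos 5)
Your proof is correct, and it takes a genuinely different route from the paper's. The paper proves the proposition by reducing to the case $n=1$ (using $\beta_i(B_k(N_{h,n}))=\beta_i(B_k(N_{h+1,n-1}))$) and then inducting on $h$ via the recursive formula
\[
\beta_i(B_k(N_{h,n}))=\sum_{j=0}^k \dim H_{i-j}(B_{k-j}(N_{h-1,n});\mathbb{Q}),
\]
mirroring the strategy of Proposition~\ref{prop:closed nonorientable}. You instead compute the full bigraded Poincar\'e series of the complex directly. The telescoping cancellation $(1-s^2t^2)=(1-st)(1+st)$ that collapses the series to $\dfrac{1+st}{(1-s)(1-st)^{h+n-2}}$ is exactly the sort of global simplification the paper advocates elsewhere (``treating the complex as a whole''), and it avoids both the induction on $h$ and the separate reduction to $n=1$. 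Your coefficient extraction is correct, including the observation that the factor $(1-s)^{-1}$ is what produces the condition $i\le k$, and the passage to complementary binomial coefficients recovers the paper's formula (with the edge case $i=0$ absorbed by the convention $\binom{a}{-1}=0$, which is cleaner in your form $\binom{h+n+i-4}{i-1}$ than in the paper's $\binom{h+n+i-4}{h+n-3}$). The one point worth a sentence in a final write-up is the degenerate case $h+n=2$ (i.e.\ $h=n=1$), where the factor $(1-st)^{h+n-2}$ is trivial; your generating-function identity still holds there, since $\binom{m-1}{m}=0$ for $m\ge1$ and $\binom{-1}{0}=1$, but flagging it would preempt a reader's worry.
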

\begin{proof}
As before, there is no differential in the Chevalley--Eilenberg complex, so the $i$th Betti number of $B_k(N_{h,n})$ is given by the dimension of the summand of degree $i$ and weight $k$ in $$CE(\mathfrak{g}_{h,n})\cong\mathbb{Q}[p, \tilde u_1,\ldots, \tilde u_{h+n-2}]\otimes\Lambda[u_1,\ldots, u_{h+n-1}],$$ where $|p|=0$, $|u_j|=1$, and $|\tilde u_j|=2$ (the same caveat about naming applies), and we again have the recursive formula $$\beta_i(B_k(N_{h,n}))= \sum_{j=0}^k\dim H_{i-j}(B_{k-j}(N_{h-1,n});\mathbb{Q})$$ for $h\geq2$, together with the base case $$\beta_i(B_k(N_{1,1}))=\begin{cases}
1&\quad i\in\{0,1\}\\
0&\quad\text{else}
\end{cases}$$ for $k\geq1$. Since $\beta_i(B_k(N_{h,n}))=\beta_i(B_k(N_{h+1,n-1}))$ for $n\geq2$, it suffices to verify the case $n=1$, which follows as before by induction on $h$.
\end{proof}

\subsection{Open orientable surfaces} We turn our attention to $\Sigma_{g,n}$, the $n$-punctured orientable surface of genus $g$, with $n>0$. As a graded vector space, the Lie algebra $\mathfrak{g}_{\Sigma_{g,n}}$ is given by
\[
H_c^{-*}(\Sigma_{g,n};\mathbb{Q})[1]
\oplus
H_c^{-*}(\Sigma_{g,n};\mathbb{Q})[2].
\] One knows that
\[
H_c^{-*}(\Sigma_{g,n});\mathbb{Q})\cong \mathbb{Q}[-1]^{2g+n-1}\oplus \mathbb{Q}[-2],
\] and the only nonzero brackets take the paired classes in the lefthand copy of $H_c^1(\Sigma_{g,n})$ to their cup product in the righthand copy of $H^2_c(\Sigma_{g,n})$. Denoting by $\mathfrak{h}$ the Lie subalgebra spanned by these $2g+1$ classes, we may decompose $\mathfrak{g}_{\Sigma_{g,n}}$ as a sum of $\mathfrak{h}$ and an Abelian Lie algebra spanned by the remaining classes. This sum passes to a tensor product at the level of Chevalley--Eilenberg complexes, and we obtain the decomposition
\[
CE(\mathfrak{g}_{\Sigma_{g,n}})\cong
CE(\mathfrak{h})\otimes W_{g,n}.
\]
Here $W_{g,n}=\Sym(p, \tilde{a}_r,\tilde{b}_r, u_s,\tilde u_s)$, $|p|=0$, $|u_s|=1$, $|\tilde u_s|=2$, $|\tilde{a}_r|=|\tilde b_r|=2$, and $(1,1)\leq (r,s)\leq (g,n-1)$.

The Lie algebra homology of $\mathfrak{h}$ was computed in dual form in~\cite{BodigheimerCohen:RCCSS}.

\begin{thm}\label{thm:BodigheimerCohen}\cite[Theorem D.]{BodigheimerCohen:RCCSS}
$$
\dim_{i,k}(
H(CE(\mathfrak{h}))=\begin{cases}
\binom{2g}{i}-\binom{2g}{i-2}&\quad 0\leq i\leq g,\, i=k\\
\binom{2g}{i-1}-\binom{2g}{i+1}&\quad g+1\leq i\leq 2g+1,\, i=k-1\\
0&\quad \text{else.}
\end{cases}$$ 
\end{thm}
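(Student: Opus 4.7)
The plan is to recognize $CE(\mathfrak{h})$ as a tiny $\mathrm{Sp}(2g)$-equivariant complex to which the Lefschetz decomposition of the exterior algebra of a symplectic vector space applies directly.

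After the shift implicit in the Chevalley--Eilenberg construction, $\mathfrak{h}[1]$ sits in homological degree one with $2g$ weight-one generators $a_1,b_1,\ldots,a_g,b_g$ and one weight-two generator $\omega$. Because every generator is odd, the underlying algebra is exterior, and, using the sign convention established at the end of the previous section, the differential $D$ is the unique coderivation satisfying $D(\omega)=0$ and $D(a_r b_r)=\omega$. The bigrading then splits $CE(\mathfrak{h})$ into weight-$k$ pieces, each of which is a two-term complex
\[
\Lambda^k V\;\xrightarrow{\;D\;}\;\Lambda^{k-2}V\cdot\omega
\]
in degrees $k$ and $k-1$, with $V=\mathrm{span}(a_1,b_1,\ldots,a_g,b_g)$. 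Direct inspection on the monomial basis shows that, under the evident isomorphism $\Lambda^{k-2}V\cdot\omega\cong\Lambda^{k-2}V$, the map $D$ is interior contraction with the symplectic form on $V$ whose symplectic basis is $(a_r,b_r)$.

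From here the argument is standard symplectic representation theory. The Lefschetz $\mathfrak{sl}_2$-action on $\Lambda^\bullet V$, with raising and lowering given by wedge with and contraction against the symplectic form, decomposes $\Lambda^k V$ as $\bigoplus_j \Omega^j\wedge P_{k-2j}$ with primitive pieces of dimension $\binom{2g}{j}-\binom{2g}{j-2}$ for $0\leq j\leq g$. Consequently $D$ is surjective with kernel $P_k$ when $k\leq g$, an isomorphism when $k=g+1$, and injective with cokernel of dimension $\binom{2g}{k-2}-\binom{2g}{k}$ when $k\geq g+2$. Reindexing by $i=k$ in the first range and by $i=k-1$ in the third recovers exactly the formula of the theorem; the intermediate weight $k=g+1$ contributes nothing, matching the ``else'' case.

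I expect the principal subtlety to lie in the sign-level identification of the Chevalley--Eilenberg differential with symplectic contraction, which is needed to bring the $\mathrm{Sp}(2g)$-representation-theoretic apparatus to bear. Once this identification is confirmed, everything else reduces to bookkeeping against the standard Lefschetz decomposition.
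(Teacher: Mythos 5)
The paper does not prove this statement; it is cited as Theorem D of B\"odigheimer--Cohen \cite{BodigheimerCohen:RCCSS} and used as a black box. Your proof is therefore not ``the same as'' or ``different from'' the paper's, but a self-contained argument where the paper offers none, and it is correct. The identification of the weight-$k$ piece of $CE(\mathfrak{h})$ with the two-term complex $\Lambda^k V \to \Lambda^{k-2}V\cdot\omega$, with $D$ acting as symplectic contraction, is right: since $\mathfrak{h}$ sits in degree $0$, every generator of $\mathfrak{h}[1]$ is odd, the algebra is exterior, and the coderivation formula reduces on monomials to summing contractions of $(a_r,b_r)$-pairs against $\tilde p = \omega$, with signs that cannot affect the dimension count. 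From there the hard Lefschetz $\mathfrak{sl}_2$-action on $\Lambda^\bullet V$ gives exactly the stated ranks: contraction is surjective with kernel $P_k$ for $k\le g$, an isomorphism at $k=g+1$, and injective for $k\ge g+2$ with cokernel of the claimed dimension, and the reindexing $i=k$ resp.\ $i=k-1$ reproduces the formula (including its vanishing ranges, since $P_k=0$ for $k>g$ and the binomial coefficients vanish for $i>2g+1$). One cosmetic slip: where you write ``primitive pieces of dimension $\binom{2g}{j}-\binom{2g}{j-2}$,'' the index should be the subscript of $P$ rather than the exponent of $\Omega$, i.e.\ $\dim P_m = \binom{2g}{m}-\binom{2g}{m-2}$; the intended meaning is clear and the argument is unaffected.
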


As for $W_{g,n}$, we have the following.

\begin{lemma}\label{lemma: punctured_oriented_lemma}
For any $g,k\geq0$ and $n\geq1$, 
\[
\dim_{i,k}(W_{g,n})=\begin{cases}\sum_{l=0}^{\lfloor\frac{i}{2}\rfloor}\binom{n+i-2l-2}{n-2}\binom{2g+l-1}{2g-1}&\quad i\leq k\\
0&\quad\text{else.}
\end{cases}
\]
\end{lemma}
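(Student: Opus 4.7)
The plan is to exploit the observation that every generator of $W_{g,n}$ other than $p$ has weight equal to its degree, while $p$ itself has weight $1$ and degree $0$. It follows that every monomial in $W_{g,n}$ has total weight equal to its total degree plus its $p$-exponent.

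This immediately yields the vanishing $\dim_{i,k}(W_{g,n})=0$ whenever $i>k$. For $i\le k$, every monomial of degree $i$ and weight $k$ factors uniquely as $p^{k-i}$ times a monomial of degree (and weight) $i$ in the remaining variables, so
\[\dim_{i,k}(W_{g,n})=\dim_{i}\Sym(\tilde a_r,\tilde b_r,u_s,\tilde u_s),\]
independent of $k$ in this range. This reduction to an ordinary Poincaré-series computation is the substantive step of the proof.

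Next I would factor the symmetric algebra on the right as $\Sym(\tilde a_r,\tilde b_r)\otimes\Sym(u_s,\tilde u_s)$. The first factor is a polynomial algebra on $2g$ generators of degree $2$, with Poincaré series $(1-t^2)^{-2g}$ and coefficient $\binom{2g+l-1}{2g-1}$ at $t^{2l}$. For the second, the key simplification is the cancellation
\[\frac{1+t}{1-t^2}=\frac{1}{1-t},\]
so each pair $\Lambda[u_s]\otimes\mathbb{Q}[\tilde u_s]$ has Poincaré series $(1-t)^{-1}$, and across $n-1$ pairs we obtain $(1-t)^{-(n-1)}$, whose coefficient at $t^{m}$ is $\binom{m+n-2}{n-2}$. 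Convolving these two coefficient sequences over $2l+m=i$ produces exactly the claimed formula.

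No real obstacle arises: once the weight-equals-degree observation isolates $p$ as a pure ``weight reservoir,'' the remainder is routine generating-function bookkeeping. The only moment of genuine cleverness is the cancellation above, without which the final expression would appear in a less compact double-sum form involving $\binom{2g+n+l-2}{2g+n-2}$ rather than the clean product $\binom{2g+l-1}{2g-1}\binom{n+i-2l-2}{n-2}$.
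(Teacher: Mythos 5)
Your proof is correct and follows essentially the same route as the paper's: you isolate $p$ as the unique generator with weight exceeding degree (which the paper phrases as $\dim_{i,k}(W_*\otimes W_+)=0$ for $i\neq k$), reduce to a single-graded computation on $\Sym(\tilde a_r,\tilde b_r)\otimes\Sym(u_s,\tilde u_s)$, and convolve the coefficient sequences $\binom{2g+l-1}{2g-1}$ and $\binom{m+n-2}{n-2}$. The paper reaches the latter coefficient by a stars-and-bars count rather than the explicit cancellation $\frac{1+t}{1-t^2}=\frac{1}{1-t}$, but these are the same observation.
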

\begin{proof}
Write $W_*=\mathbb{Q}[\tilde a_r, \tilde b_r]$ and $W_+=\mathbb{Q}[\tilde{u}_s]\otimes \Lambda[u_s]$. Then for $i\le k$, \begin{align*}
\dim_{i,k}(W_{g,n})&=\dim_{i,k}(\mathbb{Q}[p]\otimes W_*\otimes W_+)\\
&=\sum_{j=0}^k\dim_{i,k-j}(W_*\otimes W_+)\\
&=\dim_{i}(W_*\otimes W_+),
\end{align*} since $\dim_{i,k}(W_*\otimes W_+)=0$ whenever $i\neq k$. We also have \[\dim_{i}(W_*)=\begin{cases}
\binom{2g+l-1}{2g-1}&\quad i=2l\\
0&\quad\text{else,}
\end{cases}\] since the displayed binomial coefficient is the number of ways of writing $l$ as the sum of $2g$ nonnegative integers, while 
\[
\dim_{i}(W_+)=
\binom{n+i-2}{n-2}\] 
since the displayed binomial coefficient is the number of ways of writing $i$ as the sum of $n-1$ nonnegative integers. With these calculations in hand, the claim follows from the equality
\[\dim_{i}(W_*\otimes W_+) = \sum_{2l+m=i}\dim_{2l}(W_*)\dim_{m}(W_+).
\]
\end{proof}

Combining these calculations, we deduce the following:

\begin{proposition}\label{prop: punctured surface Betti numbers} For $g,k\geq0$ and $n\geq1$, the Betti number $\beta_i(B_k(\Sigma_{g,n}))$ is:
{\small\[
\beta_i(B_k(\Sigma_{g,n}))=\begin{cases}
 {\displaystyle\sum_{j=0}^g}\left[\binom{2g}{j}-\binom{2g}{j-2}\right]{\displaystyle\sum_{l=0}^{\lfloor\frac{i-j}{2}\rfloor}}\binom{2g+l-1}{2g-1}\bigg[\\\qquad{}\qquad{}\binom{n+i-j-2l-2}{n-2}+\binom{n+i+j-2g-2l-3}{n-2}\bigg]
&i\le k-1\\
{\displaystyle\sum_{j=0}^g}\left[\binom{2g}{j}-\binom{2g}{j-2}\right]{\displaystyle\sum_{l=0}^{\lfloor\frac{i-j}{2}\rfloor}}\binom{2g+l-1}{2g-1}\binom{n+i-j-2l-2}{n-2}&i=k\\
\displaystyle 0&\text{else.}
\end{cases}
 \]
}
\end{proposition}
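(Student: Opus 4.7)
The plan is to combine the decomposition $CE(\mathfrak{g}_{\Sigma_{g,n}}) \cong CE(\mathfrak{h}) \otimes W_{g,n}$ established just before Theorem \ref{thm:BodigheimerCohen} with the two preceding dimension computations. Since $W_{g,n}$ is the Chevalley--Eilenberg complex of an abelian Lie algebra and hence carries zero differential, the Künneth formula is trivial and gives
\[
H(CE(\mathfrak{g}_{\Sigma_{g,n}})) \;\cong\; H(CE(\mathfrak{h})) \otimes W_{g,n}
\]
as bigraded vector spaces. Applying Theorem \ref{thm:Knudsen}, the desired Betti number is therefore
\[
\beta_i(B_k(\Sigma_{g,n})) = \sum_{j+j'=i,\ k'+k''=k} \dim_{j,k'} H(CE(\mathfrak{h})) \cdot \dim_{j',k''} W_{g,n}.
\]

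I would next unpack this sum using the explicit formulas. Theorem \ref{thm:BodigheimerCohen} shows that $H(CE(\mathfrak{h}))$ is supported on the two diagonals $k' = j$ (for $0\leq j\leq g$) and $k' = j+1$ (for $g+1\leq j\leq 2g+1$); this splits the sum into two pieces. In the first piece the constraint on $W_{g,n}$ becomes $i-j \leq k-j$, i.e.\ $i \leq k$, and produces the term containing $\binom{n+i-j-2l-2}{n-2}$ by direct substitution from Lemma \ref{lemma: punctured_oriented_lemma}. In the second piece the constraint becomes $i \leq k-1$, explaining the case distinction in the proposition.

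The main bookkeeping step will be to rewrite the second piece so that its indexing matches the first. Substituting $j \mapsto 2g+1-j$ and using the symmetry $\binom{2g}{a} = \binom{2g}{2g-a}$, the coefficients
\[
\binom{2g}{j-1} - \binom{2g}{j+1}
\]
transform into $\binom{2g}{j} - \binom{2g}{j-2}$ over the range $0\leq j\leq g$, while the degree shift turns $\dim_{i-j,\,k-j-1}W_{g,n}$ into the expression with binomial coefficient $\binom{n+i+j-2g-2l-3}{n-2}$ appearing in the statement. Once this reindexing is executed, the two pieces assemble exactly into the displayed formula for $i\leq k-1$; for $i=k$, only the first piece survives, and for $i>k$ both pieces vanish.

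The conceptually interesting content is already contained in Theorem \ref{thm:BodigheimerCohen} and Lemma \ref{lemma: punctured_oriented_lemma}; the main obstacle here is purely notational, namely carrying out the reindexing of the second piece cleanly and verifying that the bidegree constraints produce precisely the three-case formula as stated.
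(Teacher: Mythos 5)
Your proposal is correct and follows essentially the same approach as the paper: you make the implicit Künneth step explicit, split the convolution along the two diagonals of Theorem \ref{thm:BodigheimerCohen}, and carry out the same reindexing $j\mapsto 2g+1-j$ on the second piece. The paper's proof dives directly into the two-part sum and reindexing, leaving the Künneth and diagonal-support bookkeeping to the reader, but the substance is identical.
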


\begin{proof}
For $i\leq k-1$, we have 
\begin{multline*}
\beta_i(B_k(\Sigma_{g,n}))
=
\sum_{j=0}^g\sum_{l=0}^{\lfloor\frac{i-j}{2}\rfloor}
\left[\binom{2g}{j}-\binom{2g}{j-2}\right]
\binom{n+i-j-2l-2}{n-2}
\binom{2g+l-1}{2g-1}
\\
+
\sum_{j=g+1}^{2g+1}\sum_{l=0}^{\lfloor\frac{i-j}{2}\rfloor}
\left[\binom{2g}{j-1}-\binom{2g}{j+1}\right]
\binom{n+i-j-2l-2}{n-2}
\binom{2g+1-1}{2g-1}.
\end{multline*} 
Indexing the first sum in the second expression by $r=2g+1-j$ instead of $j$, the difference of binomial coefficents becomes \[\binom{2g}{2g-r}-\binom{2g}{2g-r+2}=\binom{2g}{r}-\binom{2g}{r-2},\] which matches the difference in the first sum. The expression becomes 
\begin{align*}
\sum_{j=0}^g
\bigg[
\binom{2g}{j}&-\binom{2g}{j-2}
\bigg]
\bigg[\sum_{l=0}^{\lfloor \frac{i-j}{2}\rfloor}
\binom{n+i-j-2l-2}{n-2}\binom{2g+l-1}{2g-1}
\\&\qquad+
\sum_{l=0}^{\lfloor\frac{i+j-2g-1}{2}\rfloor}
\binom{n+i+j-2g-2l-3}{n-2}\binom{2g+l-1}{2g-1}
\bigg].
\end{align*} 
Now, if $l>\lfloor\frac{i+j-2g-1}{2}\rfloor$, then $2l>i+j-2g-1$, so that the first binomial vanishes. Thus we may combine the sums to obtain the desired expression.
\end{proof}

\section{Closed orientable surfaces}\label{sec:closedsurfaces}

\subsection{Technical setup} Let $\Sigma=\Sigma_g$ denote the closed surface of genus $g$. Since
\[
H_c^{-*}(\Sigma;\mathbb{Q})\cong \mathbb{Q}\oplus \mathbb{Q}[-1]^{2g}\oplus \mathbb{Q}[-2],
\]
we may write the underlying graded vector space of the Chevalley--Eilenberg complex as 
\[
CE(\mathfrak{g}_{\Sigma})\cong\mathbb{Q}[p,\tilde{a}_1,\ldots,\tilde{a}_g,\tilde{b}_1,\ldots,\tilde{b}_g,v]
\otimes
\Lambda[\tilde{p},{a}_1,\ldots,{a}_g,{b}_1,\ldots,{b}_g,\tilde{v}]
\]
where the variables are in the following degrees and weights:

\vspace{10pt}
\begin{center}
\begin{tabular}{l|cccc}
&degree $0$ & degree $1$ & degree $2$ & degree $3$\\
\hline
weight $1$&$p$&$a_i,b_i$ & $v$ &\\
weight $2$&&$\tilde{p}$&$\tilde{a}_i,\tilde{b}_i$&$\tilde{v}$
\end{tabular}
\end{center}
\vspace{10pt}

The differential $D$ is specified as a coderivation by a map from this symmetric coalgebra to the cogenerators, the nonzero components of which are $D(a_ib_i)=\tilde{p}$ and $D(vx) = \tilde{x}$ for $x$ of weight 1. 

To give a general formula for $D$, we introduce the following operators, which will play a key role in the remainder of the paper.

\begin{defi}
On $CE(\mathfrak{g}_{\Sigma})$ we define the operators
\[
\Delta = \sum_j\partial_{b_j}\partial_{a_j},\qquad
\delta = \sum_{j;\,c\in\{a,b\}} \tilde{c}_j\partial_{c_j},\] where $\partial_u$ is the degree $-1$ operator defined by formal differentiation with respect to $u$.
\end{defi}
Note that $\delta$ is a differential while $\Delta$ is not; moreover, the two commute. Since $\Delta$ is even, we can be relatively cavalier about applying it to elements, but it is necessary to be more careful with the odd operator $\delta$.

In terms of these operators, we have the formula \[D=\tilde p\Delta+\delta\partial_{v}+\frac{\tilde v}{2}\partial^2_v+\tilde p\partial_p\partial_v.\]

Our strategy in making this formula more comprehensible will be to eliminate the last two terms using contracting homotopies. In order to state the end result, we require some terminology. First, as we will see below, the Betti number $\beta_i(B_k(\Sigma_g))$ is zero for $i>k+1$ and independent of $k$ for $i<k$ (indeed, this latter fact is a general phenomenon; see \cite{Church:HSCSM} and \cite{RandalWilliams:HSUCS}). We write $\beta_i^\mathrm{st}(B(\Sigma_g))=\beta_i(B_k(\Sigma_g))$ for any $i<k$ and refer to this number as the $i$th \emph{stable Betti number}. 

In order to keep track of the ranks of graded spaces, we will use Poincar\'{e} series, our conventions regarding which may be found in Section \ref{subsec: conventions}. We write \begin{align*}\poin_{\mathrm{st}}(t)&=\sum_{i=0}^\infty\beta_i^\mathrm{st}(B(\Sigma_g))t^i\\
 \poin_{0}(t)&=\sum_{i=0}^\infty \beta_i(B_i(\Sigma_g))t^i\\
 \poin_{1}(t)&=\sum_{i=1}^\infty \beta_i(B_{i-1}(\Sigma_g))t^i
 \end{align*} 
for the Poincar\'{e} series recording the Betti numbers of interest. We further define the following two subspaces of $CE(\mathfrak{g}_{\Sigma})$:
\begin{align*}
\XX_g&\coloneqq 
\mathbb{Q}[\tilde a_i, \tilde b_i]
\otimes
\Lambda[{a}_i,{b}_i]
\\
\mathcal{K}_g&\coloneqq \left(\ker\delta|_\XX \cap \ker\Delta|_{\XX}\right),
\end{align*} where $i$ runs from 1 to $g$. Since $g$ is often fixed, we will abbreviate the names of these spaces to $\XX$ and $\mathcal{K}$ when confusion is unlikely, but we will have need of the subscript when we allow $g$ to vary. We also use the notation $X$ and $K$ for $\poin_\XX(t)$ and $\poin_{\mathcal{K}}(t)$, respectively. We note the easily verified equalities \[
X_g=\frac{1}{(1-t)^{2g}}=\sum_{i=0}^\infty \binom{2g+i-1}{i}t^i.
\]

Our first main technical result, whose proof occupies Section \ref{section:theoremproof}, provides formulas for the Poincar\'{e} series of interest in terms of these auxiliary subspaces. 

\begin{thm}\label{thm:summary} Poincar\'e series for the Betti numbers of the configuration spaces of $\Sigma$ are given by 
\begin{align*}
\poin_{\mathrm{st}}(t)&=
\frac{1+t^3}{t^2}
\big[(1+t)K  +(-t+t^2)X - 1\big]
\\
\poin_0(t)&=\frac{1}{t}\big[
(1+t^2+t^3)K - 1+t-t^2 + (-t^3+t^4)X
\big]
\\
\poin_{1}(t)&=t^2K.
\end{align*} 
\end{thm}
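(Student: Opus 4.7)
The plan is to eliminate the last two terms of
\[D = \tilde{p}\Delta + \delta\partial_v + \tfrac{\tilde{v}}{2}\partial_v^2 + \tilde{p}\partial_p\partial_v\]
by explicit contracting homotopies, and then to compute the cohomology of the resulting smaller complex in terms of $K$ and $X$. The subcomplex $(\mathbb{Q}[v]\otimes\Lambda[\tilde{v}], \tfrac{\tilde{v}}{2}\partial_v^2)$ has cohomology spanned by $1$ and $v$, with explicit contracting homotopy $H(\tilde{v}v^n) = \tfrac{2}{(n+1)(n+2)}v^{n+2}$ and $H(v^n) = 0$. Because none of the other terms of $D$ create a $\tilde v$-factor, the homological perturbation series collapses after its first term and yields an explicit reduced complex
\[\bigl(\mathcal{X}\otimes\mathbb{Q}[p]\otimes\Lambda[\tilde{p}]\otimes\{1,v\},\; \tilde{p}\Delta + (\delta + \tilde{p}\partial_p)\partial_v\bigr),\]
where now $\partial_v(v)=1$ and $\partial_v(1)=0$. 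A parallel reduction, using the contractibility of $(\mathbb{Q}[p]\otimes\Lambda[\tilde p], \tilde{p}\partial_p)$ onto $\mathbb{Q}$, eliminates $p$ and $\tilde{p}$ as well, leaving a complex of the form $\mathcal{X}\otimes W$ for a small graded auxiliary factor $W$ that records the surviving classes from $\{1,v\}\otimes\Lambda[\tilde v]$, with a transferred differential built from $\Delta$ and $\delta$ only.

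From there, using the readily verified commutation $[\Delta,\delta]=0$, the cohomology of the reduced complex can be analyzed as a bicomplex, with $\mathcal{K}=\ker\Delta\cap\ker\delta$ playing the role of the common harmonic subspace of $\mathcal{X}$. Organizing the Poincar\'{e} series of the cohomology by the contributions of the pieces of $W$ should produce the three stated formulas. In particular, the cleanest identity $\poin_1(t)=t^2 K$ records that in the codiagonal range $i=k+1$ only the subspace $v\cdot\mathcal{K}$ survives, with the $v$-class accounting for the shift $|v|=2$; the more involved $\poin_0$ and $\poin_{\mathrm{st}}$ combine contributions from $\mathcal{K}$ and from the complement of $\mathcal{K}$ in $\mathcal{X}$. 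The factor $(1+t^3)/t^2$ in $\poin_{\mathrm{st}}$ reflects the surviving $\Lambda[\tilde v]$ together with the $v$-shift, and the coefficients $\pm t$ appearing in each series encode Euler-characteristic-style cancellations between cycles and boundaries in the non-harmonic part of $\mathcal{X}$.

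The main obstacle is the second reduction. Unlike the first, the contracting homotopy for $\tilde p\partial_p\partial_v$ does not obviously force the higher-order perturbation terms to vanish: because $\Delta^2\neq 0$, expressions of the form $D' H' D'$ can be nonzero, so one must either argue directly that these higher terms cancel in the transferred differential or replace this step by a more global approach, such as a spectral sequence associated to the filtration by $v$-degree. Careful bookkeeping of signs and of the bigrading by degree and weight will also be needed throughout to extract the precise coefficients appearing in each of the three series.
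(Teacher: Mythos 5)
Your overall plan is the same as the paper's: eliminate the $v$-related terms of $D$ by contracting homotopies, then analyze the remaining complex in terms of $\Delta$, $\delta$, and $\mathcal{K}$. The first reduction (to $\mathcal{Z}\oplus v\mathcal{Z}$, where $\mathcal{Z}=\mathcal{X}\otimes\mathbb{Q}[p]\otimes\Lambda[\tilde p]$) is exactly Lemma~\ref{lemma:chainhomotopy}, and your observation that no other term of $D$ produces a $\tilde v$, so the perturbation series terminates, is the correct reason it works. You also correctly flag the second reduction as the crux of the matter.

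But there is a genuine gap in the second step, and it is more serious than the perturbation-series issue you raise. You propose to use the contractibility of $\bigl(\mathbb{Q}[p]\otimes\Lambda[\tilde p],\ \tilde p\partial_p\bigr)$ onto $\mathbb{Q}$ to ``eliminate $p$ and $\tilde p$ as well,'' leaving a complex of the form $\mathcal{X}\otimes W$ for a small graded factor $W$. This cannot be made to work. First, the relevant term of $D$ is $\tilde p\partial_p\partial_v$, not $\tilde p\partial_p$: it only acts on the $v\mathcal{Z}$ summand. On the $\{1\}$-summand the factor $\mathbb{Q}[p]\otimes\Lambda[\tilde p]$ is not a contractible subcomplex and must be retained. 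Second, and more fundamentally, contracting out $p$ destroys precisely the information that the three series $\poin_{\mathrm{st}},\poin_0,\poin_1$ encode. The class $p$ has degree $0$ and weight $1$, and the three series correspond to the weight slices $k>i$, $k=i$, $k=i-1$; distinguishing them requires knowing the $p$-exponent of a surviving class. (Already in degree $0$ the classes $p^k$, $k\geq 1$, give the stable $\beta_0^{\mathrm{st}}=1$, and they would vanish after your proposed contraction.) In the paper this is handled by Lemma~\ref{lemma:chainhomotopy2}, which deforms $\mathcal{Z}\oplus v\mathcal{Z}$ not onto $\mathcal{X}\otimes W$ but onto $\YY\oplus v\tilde p\YY\oplus v\XX$ with $\YY=\mathbb{Q}[p]\otimes\XX$ explicitly kept, and whose twisted differential carries an explicit $p$-degree dependence (the $\tfrac{p\Delta}{n+1}$-type terms). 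Only after that, and only on the degree-less-than-weight subspace $p\YY\oplus pv\tilde p\YY$, does Corollary~\ref{corollary:stable complex} produce a complex $(\XX\oplus\XX[3],d_{\mathrm{st}})$ of the shape you are hoping for — and its components are $\Delta^2$ and $\delta\Delta$, not bare $\Delta$ and $\delta$, which is why the subsequent Lemmas~\ref{lemma: acyclic}--\ref{lemma: ker dD in terms of K} are needed rather than a straightforward bicomplex argument. Your instinct that $\poin_1=t^2K$ comes from $v\cdot\mathcal{K}$ is correct, but the other two series require the $p$-filtration you have thrown away.
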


These are the formulas we use to make our computations. As an illustration of their efficiency, we present the case $g=0$.

\begin{corollary}\label{cor:sphere}
For any $k\geq0$, $$\beta_i(B_k(S^2))=\begin{cases}
1 &\quad i=0 \text{ or } i=3\leq k \text{ or } i=2=k+1\\
0 &\quad \text{else}
\end{cases}$$
\end{corollary}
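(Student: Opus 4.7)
The plan is to apply Theorem~\ref{thm:summary} directly, with $g=0$. In this case, the algebra $\XX_0$ has no generators, so $\XX_0 = \mathcal{K}_0 = \mathbb{Q}$, giving $X_0 = K_0 = 1$. So the computation reduces to a straight substitution into the three Poincar\'e series formulas.

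Substituting $X = K = 1$ into the formulas of Theorem~\ref{thm:summary}, I expect to obtain $\poin_{\mathrm{st}}(t) = 1 + t^3$, $\poin_0(t) = 1 + t^3$, and $\poin_1(t) = t^2$ after routine algebraic simplification. (One checks that the various $-1$ and $\pm t, \pm t^2, \pm t^3, \pm t^4$ terms cancel in exactly the right way once the $1/t$ or $1/t^2$ prefactor is applied.)

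From the discussion preceding Theorem~\ref{thm:summary}, $\beta_i(B_k(\Sigma_g))$ vanishes for $i > k+1$ and equals its stable value $\beta_i^{\mathrm{st}}$ for $i < k$, while the cases $i = k$ and $i = k+1$ are governed by $\poin_0$ and $\poin_1$ respectively. Extracting coefficients, the stable series $1+t^3$ gives $\beta_i(B_k(S^2)) = 1$ for $i \in \{0,3\}$ with $i < k$; the series $\poin_0 = 1+t^3$ gives $\beta_0(B_0(S^2)) = \beta_3(B_3(S^2)) = 1$; and $\poin_1 = t^2$ gives $\beta_2(B_1(S^2)) = 1$. All other coefficients are zero. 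Assembling these cases yields exactly the formula claimed: $\beta_0(B_k(S^2)) = 1$ for every $k$, $\beta_3(B_k(S^2)) = 1$ for $k \ge 3$, $\beta_2(B_1(S^2)) = 1$, and all other Betti numbers vanish.

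There is no real obstacle: since Theorem~\ref{thm:summary} does all the work, the proof is just a bookkeeping exercise of plugging in $g=0$ and matching the nonzero coefficients against the three ranges ($i<k$, $i=k$, $i=k+1$). The only mild care required is to make sure the three ranges together cover the corollary's three cases without overlap or omission (for instance, verifying that $\beta_0$ is consistently $1$ both as a stable value and as the constant term of $\poin_0$).
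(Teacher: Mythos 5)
Your proposal is correct and follows essentially the same route as the paper: set $g=0$ so that $\XX_0 = \mathcal{K}_0 = \mathbb{Q}$ (equivalently $X=K=1$), substitute into the three formulas of Theorem~\ref{thm:summary} to obtain $\poin_{\mathrm{st}}=\poin_0=1+t^3$ and $\poin_1=t^2$, then read off coefficients in the three regimes $i<k$, $i=k$, $i=k+1$. The only cosmetic difference is that the paper phrases the $g=0$ collapse as $\delta=\Delta=0$ rather than directly invoking $K=X=1$, but the bookkeeping at the end is identical.
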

\begin{proof}
Since $g=0$, $\XX=\mathbb{Q}$, and $\delta=\Delta=0$. Thus \begin{align*}
\poin_{\mathrm{st}}(t)&=1+t^3\\
\poin_0(t)&=1+t^3\\
\poin_{1}(t)&=t^2.
\end{align*} Thus, when $i=0$, there is a one-dimensional contribution from the first series for all $k>0$ and from the second series when $k=0$; when $i=2$, there is a one-dimensional contribution from the third series when $k=1$; when $i=3$, there is a one-dimensional contribution from the first series for all $k>3$ and from the second series when $k=3$; and there are no contributions for $i\notin\{0,2,3\}$. 
\end{proof}

Our second main technical result is a computation of the graded dimension of the auxiliary space $\mathcal{K}_g$. 

\begin{thm}
\phantomsection
\label{thm: Kg calculation}
\begin{enumerate} \item\label{item:Kg dimensions} For $i\ge 3$ and $g\ge 0$, the graded dimensions of $\mathcal{K}_g$ are given by 
\[
\dim_i(\mathcal{K}_g)=\sum_{j=0}^{g-1}\sum_{m=0}^j
(-1)^{g+j+1}\frac{2j-2m+2}{2j-m+2}
\binom{\frac{6j+2i+2g-2m-1-3(-1)^{i+j+g+m}}{4}}{m,2j-m+1}.
\] The special cases for $i\le 2$ are
\begin{align*}
\dim_i(\mathcal{K}_g)=&
\left\{
\begin{array}{ll}
1&i=0\\
0&i=1\\
2g&i=2.
\end{array}\right.
\end{align*}
\item\label{item:high g Kg dimensions} In the range $3\le i\le g+2$, there is the simplification
\[\dim_i(\mathcal{K}_g)=\binom{2g+i-3}{i-1}.\]
\end{enumerate}
\end{thm}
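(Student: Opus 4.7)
The plan is to compute the Poincaré series $K_g(t)=\poin_{\mathcal{K}_g}(t)$ as an explicit rational function, then extract coefficients to prove part (\ref{item:Kg dimensions}), and finally derive the simplification in part (\ref{item:high g Kg dimensions}) by a combinatorial identity valid in the stated range.

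The small-degree cases $i\le 2$ will follow by direct inspection of $\XX_g$: in degree $0$ only the constant $1$ survives; in degree $1$ every element is a linear combination of $a_j,b_j$ which $\delta$ sends bijectively onto $\tilde a_j,\tilde b_j$, so $\mathcal{K}_g=0$; in degree $2$ the $2g$ polynomial generators $\tilde a_j,\tilde b_j$ lie in both kernels, while a short dimension count over the exterior square eliminates everything else.

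For $i\ge 3$ the strategy is as follows. Because $\Delta=\sum_j\partial_{b_j}\partial_{a_j}$ acts only on the exterior factor, and agrees there with the contraction adjoint to the Lefschetz operator $L$ of multiplication by $\omega=\sum_j a_jb_j$, we have a decomposition
\[
\ker\Delta \;=\; \mathbb{Q}[\tilde a_j,\tilde b_j]\otimes P,\qquad P \;=\; \bigoplus_{r=0}^g P^r,
\]
where $P^r$ denotes the primitive exterior $r$-forms with $\dim P^r=\binom{2g}{r}-\binom{2g}{r-2}$. Since $[\delta,\Delta]=0$, the operator $\delta$ restricts to an honest differential on $\ker\Delta$, and $\mathcal{K}_g$ is its kernel. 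The next step is to pin down the cohomology $H(\ker\Delta,\delta)$. Using the Koszul acyclicity of $(\XX,\delta)$ together with the short exact sequence of $\delta$-complexes $0\to\ker\Delta\to\XX\xrightarrow{\Delta}\Delta(\XX)\to 0$, and iterating through the truncated Lefschetz tower decomposition $\XX=\bigoplus_{k}L^k(\ker\Delta)$, one expresses $H(\ker\Delta,\delta)$ as an alternating sum indexed by a Lefschetz depth $j\in\{0,\ldots,g-1\}$. Feeding the result into the universal identity
\[
\poin_{\ker\delta|_V}(t)\;=\;\frac{\poin_{H(V,\delta)}(t)+t\,\poin_V(t)}{1+t}
\]
applied with $V=\ker\Delta$ yields a closed rational form for $K_g(t)$. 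Expanding the denominator $(1-t^2)^{-2g}=\sum_i\binom{2g+i-1}{i}t^i$ and tracking parities produces the double sum in part (\ref{item:Kg dimensions}): the sign $(-1)^{g+j+1}$ reflects the Lefschetz-depth alternation, and the rational factor $(2j-2m+2)/(2j-m+2)$ records the $\mathfrak{sl}_2$-weight normalization attached to each primitive summand.

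For part (\ref{item:high g Kg dimensions}), in the range $3\le i\le g+2$ the inner multinomials $\binom{N}{m,\,2j-m+1}$ vanish except on a small diagonal set of $(j,m)$, and the surviving terms telescope via a Chu--Vandermonde style identity to the single binomial $\binom{2g+i-3}{i-1}$. The principal obstacle is the computation of $H(\ker\Delta,\delta)$: since $\Delta^2\ne 0$ one cannot invoke the standard bicomplex spectral sequence, and instead one must carefully track how $\delta$ shuffles the truncated Lefschetz towers, the interaction of which is the combinatorial source of the formula.
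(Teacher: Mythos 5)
Your setup is sound as far as it goes: $\Delta$ acts only on the exterior factor, $\ker\Delta=\mathbb{Q}[\tilde a_j,\tilde b_j]\otimes P$ with the correct dimensions for the primitives, $[\delta,\Delta]=0$ so $\delta$ restricts to $\ker\Delta$, and $\mathcal{K}_g=\ker(\delta|_{\ker\Delta})$. The reduction
\[
K_g \;=\; \frac{\poin_{H(\ker\Delta,\delta)}(t)+t\,\poin_{\ker\Delta}(t)}{1+t}
\]
is a correct rearrangement of the general Poincar\'e-series identity, and $\poin_{\ker\Delta}(t)$ is readily available. This is a genuinely different route from the paper's, which never invokes the Lefschetz decomposition: the paper instead recurses on \emph{genus}, producing the exact sequence of Lemma~\ref{lemma:exact_sequence} involving the auxiliary spaces $\mathcal{V}(g,1)$, the scalar recurrence $tK_{g+1}=1+t^3+tSV_{g,1}-K_g$ of Lemma~\ref{lemma: good recurrence}, and the explicit evaluation of $V_{g,1}$ in Corollary~\ref{cor:Vsolution}. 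In the paper's formula the summation index $j$ runs over sub-genera $0,\ldots,g-1$ coming from unrolling that recurrence, not over Lefschetz depths, so the identification you propose is at this point speculative.

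The genuine gap is the step you yourself flag as the obstacle: you never compute $H(\ker\Delta,\delta)$. You invoke the long exact sequence for $0\to\ker\Delta\to\XX\xrightarrow{\Delta}\Delta\XX\to 0$ and assert that iterating through the Lefschetz towers produces an alternating sum ``indexed by Lefschetz depth $j$,'' but this is precisely where the work lives. The operator $\delta$ does not commute with $L=\omega\wedge(-)$ (since $\delta\omega\neq 0$), so the decomposition $\XX=\bigoplus_k L^k(\ker\Delta)$ is not a decomposition of $\delta$-complexes, and the quotient complex $\Delta\XX$ is not simply a shifted copy of one of the summands. Making the ``shuffling'' precise — and showing it produces exactly the sign $(-1)^{g+j+1}$, the weight factor $\tfrac{2j-2m+2}{2j-m+2}$, and the parity-dependent binomial in the target formula — is an entirely nontrivial calculation, and nothing in the proposal shows it closes. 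The same issue affects part~(\ref{item:high g Kg dimensions}): the claimed Chu–Vandermonde telescoping is asserted rather than exhibited, whereas the paper proves it by induction on $i$ using the recurrence and the congruence $V_{g,1}\equiv(1+t^3)(1-t^2)/(1-t)^{2g}\pmod{t^{g+2}}$. Until $H(\ker\Delta,\delta)$ is actually determined, the proposal is an outline of a plausible alternative framework, not a proof.
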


Proving this result will be the object of Section \ref{section:lemmasproof}; for the moment, we concentrate on exploiting it.

\subsection{Results} The three corollaries that follow provide explicit formulas for all Betti numbers of configuration spaces of closed surfaces. The proofs are immediate from Theorems \ref{thm:summary} and \ref{thm: Kg calculation}, together with the formula for $X$ given above. When the degree $i$ is at least five, the situation is ``generic'' and each formula is merely the sum of the formula for $K_g$ given in Theorem \ref{thm: Kg calculation}(\ref{item:Kg dimensions}), suitably reindexed, with a simplification of any summands involving $X$. When $i$ is small, the calculation is easily carried out by hand, in some cases with the aid of Theorem \ref{thm: Kg calculation}(\ref{item:high g Kg dimensions}).

\begin{corollary}\label{cor:unstable k+1}
For $i\ge 5$ and $g\ge 0$ the unstable Betti number $\beta_i(B_{i-1}(\Sigma_g))$ is 
\[
\beta_i(B_{i-1}(\Sigma_g))
=
\sum_{j=0}^{g-1}\sum_{m=0}^j
(-1)^{g+j+1}\frac{2j-2m+2}{2j-m+2}
\binom{\frac{6j+2i+2g-2m-5-3(-1)^{i+j+g+m}}{4}}{m,2j-m+1}.
\]
The special cases for $i<5$ are:
\begin{align*}
\beta_i(B_{i-1}(\Sigma_g))=
\left\{
\begin{array}{ll}
0&i=1\\
1&i=2\\
0&i=3\\
2g&i=4.
\end{array}\right.
\end{align*}
\end{corollary}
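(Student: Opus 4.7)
The plan is to unpack the identification $\poin_1(t)=t^2K$ from Theorem \ref{thm:summary}. Since $\poin_1(t)=\sum_{i\geq 1}\beta_i(B_{i-1}(\Sigma_g))t^i$ by definition, extracting the coefficient of $t^i$ gives the clean identity
\[
\beta_i(B_{i-1}(\Sigma_g))=\dim_{i-2}(\mathcal{K}_g).
\]
Thus the corollary reduces entirely to Theorem \ref{thm: Kg calculation}, and no further combinatorics or analysis of $\XX$ is required; in particular, note that $X$ does not enter the formula for $\poin_1$, which is why these unstable Betti numbers depend only on $\mathcal{K}_g$.

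Next I would do the routine index shift. For $i\geq 5$ we have $i-2\geq 3$, so Theorem \ref{thm: Kg calculation}(\ref{item:Kg dimensions}) applies, and I substitute $i\mapsto i-2$ into the displayed formula. In the numerator of the binomial this replaces $2i$ by $2(i-2)=2i-4$, turning the constant $-1$ into $-5$; in the exponent of $(-1)$, the parity $(i-2)+j+g+m$ equals $i+j+g+m$. Both substitutions reproduce precisely the expression claimed in the corollary.

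The special cases for $i\in\{1,2,3,4\}$ correspond to $\dim_{i-2}(\mathcal{K}_g)$ with $i-2\in\{-1,0,1,2\}$; here I simply read off the tabulated values from the second half of Theorem \ref{thm: Kg calculation}, noting that $\dim_{-1}(\mathcal{K}_g)=0$ since $\mathcal{K}_g$ is nonnegatively graded.

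There is no serious obstacle to overcome: the only things to verify are that the theorem statements have been invoked with the correct degree shift and that the parity of $(-1)^{(i-2)+j+g+m}$ agrees with $(-1)^{i+j+g+m}$. Once these bookkeeping checks are performed, the result is immediate.
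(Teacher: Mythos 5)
Your argument is correct and is exactly the paper's intended proof: the paper states that these corollaries are "immediate from Theorems \ref{thm:summary} and \ref{thm: Kg calculation}," and your unpacking of $\poin_1(t)=t^2K$ into $\beta_i(B_{i-1}(\Sigma_g))=\dim_{i-2}(\mathcal{K}_g)$, together with the degree shift $i\mapsto i-2$ and the tabulated low-degree cases, is precisely that. You are also right to observe that $X$ plays no role here, which is consistent with the paper's phrasing that $X$ only enters where there are "summands involving $X$."
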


\begin{corollary}\label{cor: second unstable betti number}
For $i\ge 5$ and $g\ge 0$, the unstable Betti number $\beta_i(B_{i}(\Sigma_g))$ is
\begin{multline*} 
-\binom{2g+i-4}{2g-2}+\sum_{j=0}^{g-1}\sum_{m=0}^j
(-1)^{g+j+1}\frac{2j-2m+2}{2j-m+2}
\Bigg[
\binom{\frac{6j+2i+2g-2m+1+3(-1)^{i+j+g+m}}{4}}{m,2j-m+1}
\\+\binom{\frac{6j+2i+2g-2m-3+3(-1)^{i+j+g+m}}{4}}{m,2j-m+1}
+\binom{\frac{6j+2i+2g-2m-5-3(-1)^{i+j+g+m}}{4}}{m,2j-m+1}
\Bigg].
\end{multline*}
The special cases for $i<5$ are:
\begin{align*}
\beta_i(B_{i}(\Sigma_g))=
\left\{
\begin{array}{ll}
1&i=0\\
2g&i=1\\
2g^2-g&i=2\\
4&i=3,g=1\\
(4g^3-g+3)/3&i=3,g\ne 1\\
0&i=4,g=0\\
4&i=4,g=1\\
24&i=4,g=2\\
(4g^4+4g^3-g^2+11g)/6&i=4,g>2.
\end{array}\right.
\end{align*}
\end{corollary}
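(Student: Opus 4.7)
My plan is to extract the coefficient of $t^i$ from the formula for $\poin_0(t)$ furnished by Theorem~\ref{thm:summary} and to express each resulting $\dim_j(\mathcal{K}_g)$ in closed form via Theorem~\ref{thm: Kg calculation}. Starting from
\[
\poin_0(t)=\frac{1}{t}\big[(1+t^2+t^3)K + (-t^3+t^4)X + (-1+t-t^2)\big],
\]
the coefficient of $t^i$ for $i\geq 2$ (where the polynomial $-1+t-t^2$ no longer contributes) equals
\[
\dim_{i+1}(\mathcal{K}_g)+\dim_{i-1}(\mathcal{K}_g)+\dim_{i-2}(\mathcal{K}_g) + \binom{2g+i-4}{2g-1}-\binom{2g+i-3}{2g-1},
\]
using the expansion $X=\sum_{n\geq 0}\binom{2g+n-1}{2g-1}t^n$ noted after the definition of $\XX_g$. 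Pascal's identity collapses the difference of binomials to $-\binom{2g+i-4}{2g-2}$, producing the leading subtracted term of the corollary.

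For $i\geq 5$ the three shifts $i+1,\, i-1,\, i-2$ are all at least $3$, so Theorem~\ref{thm: Kg calculation}(\ref{item:Kg dimensions}) applies to each. Substituting $i'\in\{i+1,i-1,i-2\}$ into that formula and tracking how the exponent $(-1)^{i'+j+g+m}$ transforms — the sign flips for $i'=i\pm 1$ and is preserved for $i'=i-2$ — reproduces the three multinomial summands inside the bracket with the indicated $\pm 3(-1)^{i+j+g+m}$ decorations and the shifted constants $+1,\, -3,\, -5$ in the numerators. This step is purely mechanical, and is really the content of the generic formula.

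The small-$i$ rows of the table require separate verification. For $i\leq 2$ the displayed identity still applies, and the values follow from the three base cases $\dim_0(\mathcal{K}_g)=1,\ \dim_1(\mathcal{K}_g)=0,\ \dim_2(\mathcal{K}_g)=2g$. For $i=3$ and $i=4$ the relevant shifts $j\in\{i+1,i-1,i-2\}$ lie within the "triangular" range $j\leq g+2$ covered by Theorem~\ref{thm: Kg calculation}(\ref{item:high g Kg dimensions}) whenever $g$ is large enough; the residual cases — $i=3$ with $g=1$, and $i=4$ with $g\in\{0,1,2\}$ — require one entry of part~(\ref{item:Kg dimensions}) to be evaluated by hand, after which the polynomial expressions $4$, $4$, $24$, and $(4g^4+4g^3-g^2+11g)/6$ drop out. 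The only point of friction is the bookkeeping: deciding, for each $(i,g)$, which piece of Theorem~\ref{thm: Kg calculation} governs each of the three shifted dimensions so that the closed forms in the table assemble correctly.
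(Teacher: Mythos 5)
Your proof is correct and follows essentially the same path the paper indicates: extract the coefficient of $t^i$ from $\poin_0$, apply Pascal's identity to collapse the two $X$ contributions to $-\binom{2g+i-4}{2g-2}$, and reindex the formula of Theorem~\ref{thm: Kg calculation}(\ref{item:Kg dimensions}) at the three shifts $i+1$, $i-1$, $i-2$ (tracking the parity flip as you do), with the small-$i$ rows handled by hand and, where applicable, by Theorem~\ref{thm: Kg calculation}(\ref{item:high g Kg dimensions}). One minor slip in exposition: for $i\leq 1$ the Laurent polynomial $-t^{-1}+1-t$ does still contribute to the coefficient of $t^i$, so "the displayed identity still applies" needs that correction for $i\in\{0,1\}$, though the tabulated values $\beta_0=1$ and $\beta_1=2g$ of course still emerge.
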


\begin{corollary}\label{cor: stable numbers}
For $i\ge 5$ and $g\ge 0$, the stable Betti number $\beta_i^{\mathrm{st}}(B(\Sigma_g))$ is 
\begin{multline*}
-\binom{2g+i-1}{2g-2}-\binom{2g+i-4}{2g-2}+\sum_{j=0}^{g-1}\sum_{m=0}^j
(-1)^{g+j+1}\frac{2j-2m+2}{2j-m+2}
\Bigg[
\\\binom{\frac{6j+2i+2g-2m+3-3(-1)^{i+j+g+m}}{4}}{m,2j-m+1}
+\binom{\frac{6j+2i+2g-2m+1+3(-1)^{i+j+g+m}}{4}}{m,2j-m+1}
\\+\binom{\frac{6j+2i+2g-2m-3+3(-1)^{i+j+g+m}}{4}}{m,2j-m+1}
+\binom{\frac{6j+2i+2g-2m-5-3(-1)^{i+j+g+m}}{4}}{m,2j-m+1}
\Bigg].
\end{multline*}

The special cases for $i<5$ are:
\begin{align*}
\beta_i^{\mathrm{st}}(B(\Sigma_g))=
\left\{
\begin{array}{ll}
1&i=0\\
2g&i=1\\
0&i=2,g=0\\
3&i=2,g=1\\
2g^2-g&i=2,g>1\\
1&i=3,g=0\\
5&i=3,g=1\\
16&i=3,g=2\\
(4g^3-g+3)/3&i=3,g>2\\
0&i=4,g=0\\
7&i=4,g=1\\
28&i=4,g=2\\
90&i=4,g=3\\
(4g^4+4g^3-g^2+11g)/6&i=4,g>3.
\end{array}\right.
\end{align*}
\end{corollary}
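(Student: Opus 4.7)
The plan is to read off the coefficient of $t^i$ in the expression for $\poin_{\mathrm{st}}(t)$ given by Theorem \ref{thm:summary}, substituting the formula of Theorem \ref{thm: Kg calculation} for each coefficient of $K$ and using $\dim_i(\XX_g) = \binom{2g+i-1}{i}$ for each coefficient of $X$.

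First I would expand
\[
\poin_{\mathrm{st}}(t) = (t^{-2} + t^{-1} + t + t^2)K(t) + (-t^{-1} + 1 - t^2 + t^3)X(t) - (t^{-2} + t).
\]
For $i \geq 5$ the constant polynomial contributes nothing, and all shifted indices on $K$ are at least $3$, so Theorem \ref{thm: Kg calculation}(\ref{item:Kg dimensions}) can be applied directly to each of $\dim_{i+2}\mathcal{K}_g$, $\dim_{i+1}\mathcal{K}_g$, $\dim_{i-1}\mathcal{K}_g$, $\dim_{i-2}\mathcal{K}_g$. The key observation is that the shifts $i \mapsto i \pm 2$ preserve the parity of $i+j+g+m$ while $i \mapsto i \pm 1$ flip it, which accounts for the sign patterns on the $3(-1)^{i+j+g+m}$ terms inside the four displayed multinomial coefficients, with constants $+3, +1, -3, -5$ appearing in the numerators.

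For the contribution from $X$, the coefficient of $t^i$ in $(-t^{-1} + 1 - t^2 + t^3)X(t)$ is
\[
-\binom{2g+i}{i+1} + \binom{2g+i-1}{i} - \binom{2g+i-3}{i-2} + \binom{2g+i-4}{i-3}.
\]
Applying Pascal's identity $\binom{n}{k} = \binom{n-1}{k} + \binom{n-1}{k-1}$ once to the first pair and once to the second pair collapses this to $-\binom{2g+i-1}{i+1} - \binom{2g+i-4}{i-2}$, which equals $-\binom{2g+i-1}{2g-2} - \binom{2g+i-4}{2g-2}$ by the symmetry $\binom{n}{k} = \binom{n}{n-k}$. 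These are the two leading binomial coefficients in the stated formula.

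For $i \in \{0,1,2,3,4\}$ I would perform each computation by hand. The base values $\dim_0 \mathcal{K}_g = 1$, $\dim_1 \mathcal{K}_g = 0$, $\dim_2 \mathcal{K}_g = 2g$ cover the lowest shifts, while Theorem \ref{thm: Kg calculation}(\ref{item:high g Kg dimensions}), $\dim_j \mathcal{K}_g = \binom{2g+j-3}{j-1}$ for $3 \leq j \leq g+2$, handles intermediate shifted indices cleanly. The case splits on $g$ arise because for small $g$ the inequality $j \leq g+2$ can fail for $j = i+1$ or $j = i+2$, in which case the full formula of Theorem \ref{thm: Kg calculation}(\ref{item:Kg dimensions}) must be invoked; one must also remember to reinstate the dropped constants $-(t^{-2} + t)$ when $i \in \{-2, 1\}$ would be at issue (here only $i = 1$). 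The only real obstacle is bookkeeping: keeping track of the parity of $(-1)^{i+j+g+m}$ under the four index shifts and handling binomial coefficients with negative lower argument as zero. No new conceptual input is needed beyond Theorems \ref{thm:summary} and \ref{thm: Kg calculation}.
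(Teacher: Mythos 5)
Your proposal is correct and follows exactly the route the paper takes: expand $\poin_{\mathrm{st}}(t)$ from Theorem \ref{thm:summary} as a Laurent polynomial combination of $K$ and $X$, reindex the formula of Theorem \ref{thm: Kg calculation}(\ref{item:Kg dimensions}) by the four shifts $i\pm1,i\pm2$ (tracking parity to get the $+3,+1,-3,-5$ pattern), collapse the $X$ contribution via Pascal and symmetry, and handle $i<5$ by hand using the base values of $\dim_i\mathcal{K}_g$ and Theorem \ref{thm: Kg calculation}(\ref{item:high g Kg dimensions}). The paper's own justification for this corollary is precisely this (it calls the deduction ``immediate'' from those two theorems and the formula for $X$), so no further comparison is needed.
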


Because the summations over genus in the formulas of Corollaries~\ref{cor:unstable k+1},~\ref{cor: second unstable betti number}, and~\ref{cor: stable numbers} are rather messy, we complement them with two further results demonstrating asymptotic behavior for high genus and for high degree, along with a handful of explicit computations in low genus and/or degree. We begin with a kind of genus stability result, which is immediate from Theorems \ref{thm:summary} and \ref{thm: Kg calculation}(\ref{item:high g Kg dimensions}):

\begin{corollary}\label{cor:genus stability}
In the range $5\le i\le g$, we have:
\[
\beta_i(B_k(\Sigma_g))=\left\{
\begin{array}{rl}
{}\displaystyle\binom{2g+i-2}{i}+\binom{2g+i-5}{i-3}&i\le k
\\\\
\displaystyle\binom{2g+i-5}{i-3}&i=k+1
\\\\
\displaystyle 0&i>k+1.
\end{array}
\right.
\]
\end{corollary}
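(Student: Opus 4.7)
The plan is to substitute Theorem~\ref{thm:summary} and Theorem~\ref{thm: Kg calculation}(\ref{item:high g Kg dimensions}) directly into coefficient extraction, then apply Pascal's rule to collapse the resulting combination of binomial coefficients. The hypothesis $5\le i\le g$ is calibrated so that every $\dim_j(\mathcal{K}_g)$ appearing in the coefficient of $t^i$ for any of the three series satisfies $3\le j\le g+2$ (the extreme indices encountered are $j=i+2\le g+2$ and $j=i-2\ge 3$), and hence equals $\binom{2g+j-3}{j-1}$. For $\XX_g$ I would use the identity $\dim_j(\XX_g)=\binom{2g+j-1}{j}$ recorded above Theorem~\ref{thm:summary}.

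First I would dispose of the case $i=k+1$, which is immediate: $[t^i]\poin_1(t)=\dim_{i-2}(\mathcal{K}_g)=\binom{2g+i-5}{i-3}$. Second, I would expand the product $\frac{1+t^3}{t^2}\big[(1+t)K+(t^2-t)X-1\big]$ and read off the coefficient of $t^i$, which is a linear combination of four $K$-terms and four $X$-terms (the stray constants $-t^{-2}-t$ contribute nothing for $i\ge 5$). The Pascal identity $\binom{2g+i}{i+1}=\binom{2g+i-1}{i+1}+\binom{2g+i-1}{i}$ cancels one $X$-term against two $K$-terms, and symmetrically the identity $\binom{2g+i-3}{i-2}=\binom{2g+i-4}{i-2}+\binom{2g+i-4}{i-3}$ eliminates another $X$-term against another $K$-term. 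The survivors are exactly $\binom{2g+i-2}{i}+\binom{2g+i-5}{i-3}$. Third, the same manipulation applied to the shorter expansion of $\poin_0(t)$ yields the same surviving terms, so the formula for the range $i\le k$ is uniform whether $i<k$ or $i=k$.

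Finally, the case $i>k+1$ is the standing observation that the three series of Theorem~\ref{thm:summary} together encode all nonzero Betti numbers of $B_k(\Sigma_g)$, as remarked in the paper.

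The main obstacle, such as it is, is purely clerical: tracking signs and binomial indices carefully enough across the two Pascal cancellations in $\poin_{\mathrm{st}}$, and verifying that the $X$-contributions are arranged precisely to complement the $K$-contributions. There is no new conceptual input beyond Theorems~\ref{thm:summary} and~\ref{thm: Kg calculation}.
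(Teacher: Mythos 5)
Your proposal is correct and matches the paper's (implicit) approach — the paper simply declares the result immediate from Theorems~\ref{thm:summary} and~\ref{thm: Kg calculation}(\ref{item:high g Kg dimensions}), and you have carried out the coefficient extraction and Pascal cancellations explicitly. One small bookkeeping correction: each Pascal step rewrites one $X$-term as a sum of one $K$-term and one $X$-term already present with matching signs (e.g.\ $-\binom{2g+i}{i+1}=-\binom{2g+i-1}{i+1}-\binom{2g+i-1}{i}$ annihilates $\dim_{i+2}(\mathcal{K}_g)$ and $\dim_i(\XX_g)$), rather than ``one $X$-term against two $K$-terms,'' but the net tally and the surviving terms $\binom{2g+i-2}{i}+\binom{2g+i-5}{i-3}$ are exactly as you state.
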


Next, there is the following consequence of the formulas of Corollaries~\ref{cor:unstable k+1},~\ref{cor: second unstable betti number}, and~\ref{cor: stable numbers}.

\begin{corollary}\label{cor: fixed g polynomials}
For fixed $g$, there are polynomials with rational coefficients of degree $2g-1$ in one variable, $p^{\mathrm{st}}_g$, $q^{\mathrm{st}}_g$, $p_g^{0}$, $q_g^0, p_g^1$, and $q_g^1$, such that the Betti numbers in genus $g$ are given by:
\begin{align*}
\beta_i^{\mathrm{st}}(B(\Sigma_g))&=\left\{\begin{array}{ll}
p^{\mathrm{st}}_g(i) & i\ge 5\text{, odd}
\\
q^{\mathrm{st}}_g(i) & i\ge 6\text{, even.}
\end{array}\right.\\
\beta_i(B_i(\Sigma_g))&=\left\{\begin{array}{ll}
p^0_g(i) & i\ge 5\text{, odd}
\\
q^0_g(i) & i\ge 6\text{, even.}
\end{array}\right.
\\
\beta_i(B_{i-1}(\Sigma_g))&=\left\{\begin{array}{ll}
p^1_g(i) & i\ge 5\text{, odd}
\\
q^1_g(i) & i\ge 6\text{, even.}
\end{array}\right.
\end{align*}
\end{corollary}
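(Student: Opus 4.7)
The plan is to read off from the closed-form expressions of Corollaries~\ref{cor:unstable k+1}, \ref{cor: second unstable betti number}, and~\ref{cor: stable numbers} that each Betti number in question, restricted to $i$ of fixed parity, is a polynomial in $i$ of degree at most $2g-1$, and then to verify via a leading-coefficient computation that the degree is exactly $2g-1$.

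Fix $g$ and the parity of $i$. Then the sign $(-1)^{i+j+g+m}$ appearing in each summand of the double sum $\sum_{j=0}^{g-1}\sum_{m=0}^j$ depends only on $j+m \bmod 2$, so each upper index $N=N(i;j,g,m)$ of the multinomial reduces to a single well-defined quantity. Increasing $i$ by $2$ while preserving parity changes the unrounded numerator $6j+2i+2g-2m\pm 3$ by $4$, hence $N$ by $1$, so $N$ is an affine function of $i$ of slope $\tfrac{1}{2}$ on each parity class. Using the identity
\[
\binom{N}{m,\,2j-m+1}=\frac{N(N-1)\cdots(N-2j)}{m!\,(2j-m+1)!},
\]
each such summand is then a polynomial in $i$ of degree $2j+1\le 2g-1$. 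The auxiliary ``correction'' terms of the form $\binom{2g+i-c}{2g-2}$ appearing in Corollaries~\ref{cor: second unstable betti number} and~\ref{cor: stable numbers} are likewise polynomial in $i$, of degree $2g-2$. Summing the finitely many such contributions indexed by pairs $(j,m)$ with $0\le m\le j\le g-1$ therefore produces a polynomial of degree at most $2g-1$ in $i$.

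To pin down the degree exactly, I would isolate the top-degree contribution, which comes only from $j=g-1$. A brief computation yields a leading coefficient of the form
\[
\frac{1}{2^{2g-1}}\sum_{m=0}^{g-1}\frac{2(g-m)}{2g-m}\cdot \frac{1}{m!\,(2g-1-m)!},
\]
a positive rational and hence nonzero. The case $g=0$ is degenerate: the double sum is empty and $\binom{i-c}{-2}=0$, so the polynomial is identically zero, compatibly with the convention that the zero polynomial has degree $-1=2g-1$. The main (and modest) obstacle is the bookkeeping: verifying that the parity-dependent signs, the up-to-four multinomial terms in Corollary~\ref{cor: stable numbers}, and the correction binomials really do assemble into a single polynomial in $i$ of the claimed degree, with no unexpected cancellation at top order. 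Everything else is routine polynomial arithmetic.
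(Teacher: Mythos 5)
Your proof is correct and, for the upper bound on degree, takes essentially the same route as the paper: once the parity of $i$ is fixed, $(-1)^{i+j+g+m}$ is determined by $j+m \bmod 2$, the upper index of each multinomial is affine in $i$ with slope $\tfrac12$, and so each $(j,m)$-summand is a polynomial of degree $2j+1 \le 2g-1$ in $i$, with the residual binomials contributing degree $2g-2$. Where you go further is in pinning down the exact degree. The paper's one-line proof only establishes the bound $\le 2g-1$ and says nothing to preclude cancellation of the top coefficient. Your observation that the degree-$(2g-1)$ contribution comes only from $j=g-1$, where $(-1)^{g+j+1}=(-1)^{2g}=+1$ and $\tfrac{2j-2m+2}{2j-m+2}=\tfrac{2(g-m)}{2g-m}>0$ for $0\le m\le g-1$, shows the leading coefficient is a sum of strictly positive rationals (multiplied by $3$ or $4$ when several multinomial terms appear, as in Corollaries~\ref{cor: second unstable betti number} and~\ref{cor: stable numbers}), hence nonzero. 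This supplies a step the paper leaves implicit, and the ``bookkeeping'' you flag is indeed routine precisely because no sign cancellation at top order can occur. Your handling of the degenerate case $g=0$ (empty sum, zero polynomial) is also correct.
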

\begin{proof}
Inside the summations calculating the Betti number, the only factor with $i$ dependence is of the form $\binom{\frac{i}{2}+a+(-1)^ib}{k,2j-k+1}$ which is a degree $2j+1\le 2g-1$ polynomial in $i$ for fixed parity. Outside the summation, the stable and lower unstable Betti numbers also have a contribution which is polynomial of degree $2g-2$ in $i$.
\end{proof}
Unfortunately, the presentation in the corollaries is not particularly readable, and one might hope for a simplification giving the coefficients of these polynomials in a more comprehensible manner, or at least for a discernable pattern of some sort. One example of such a possible pattern is the observation that for $2\le g\le 20$, the coefficients of $p^{\mathrm{st}}_g$, $q^{\mathrm{st}}_g$, $p^0_g$, and $q^0_g$ are all nonnegative. These polynomials are given for $g\le 3$ in Figure~\ref{fig: table of polynomials in low genus}. To illustrate the growth of the integers involved, we note that $q^{\mathrm{st}}_{5}(i)$, the polynomial for even degree stable Betti numbers in genus $5$, is:
\[\frac{i^{9}}{368640}+\frac{7i^{8}}{81920}+\frac{227i^{7}}{161280}+\frac{1393i^{6}}{92160}+\frac{319i^{5}}{2880}+\frac{24947i^{4}}{46080}+\frac{9331i^{3}}{5760}+\frac{7811i^{2}}{2880}+\frac{407i}{140}+2.
\] 

Figure~\ref{fig: chart of stable values} contains a chart of values of stable Betti numbers for $g\le 6$ and $i\le 43$. We emphasize that with the formulas of this paper, the calculation of such polynomials or Betti numbers for much larger $g$ and $i$ is a computationally trivial task; these charts are included purely to provide convenient examples.

\begin{figure}
\[\arraycolsep=1.4pt\def\arraystretch{2.2}
\begin{array}{rcccc}
&g=0&g=1&g=2&g=3\\
\hline
p^{\mathrm{st}}_g&
0 & 2i-1 & \frac{2i^{3}+3i^{2}+10i+9}{8} & \frac{2i^{5}+15i^{4}+80i^{3}+198i^{2}+302i+363}{192}
\\
q^{\mathrm{st}}_g &
0 & 2i-1 & \frac{2i^{3}+3i^{2}+10i}{8} & \frac{2i^{5}+15i^{4}+80i^{3}+252i^{2}+464i+192}{192}
\\
p^0_g&
0 & \frac{3i-1}{2} & \frac{3i^{3}+i^{2}+13i+31}{16} & \frac{3i^{5}+22i^{4}+162i^{3}+500i^{2}+603i+630}{384}
\\
q^0_g&
0 & \frac{3i-4}{2} & \frac{3i^{3}+4i^{2}+28i}{16} & \frac{3i^{5}+19i^{4}+120i^{3}+500i^{2}+1200i+384}{384}
\\
p^1_g&
0 & \frac{i-3}{2} & \frac{i^{3}+i^{2}-9i-9}{16} & \frac{i^{5}+4i^{4}-2i^{3}+32i^{2}+i-420}{384}
\\
q^1_g&
0 & \frac{i}{2} & \frac{i^{3}-2i^{2}+16}{16} & \frac{i^{5}+7i^{4}-8i^{3}-76i^{2}+112i+384}{384}
\end{array}
\]
\caption{The first few polynomials from Corollary~\ref{cor: fixed g polynomials}. These give both stable and unstable Betti numbers for $i\ge 5$.}\label{fig: table of polynomials in low genus}
\end{figure}
\begin{figure}
\[
\begin{array}{lccccccc}
i\backslash g & 0 & 1 & 2 & 3 & 4 & 5 & 6\\\hline\hline
0 & 1 & 1 & 1 & 1 & 1 & 1 & 1\\
1 & 0 & 2 & 4 & 6 & 8 & 10 & 12\\
2 & 0 & 3 & 6 & 15 & 28 & 45 & 66\\
3 & 1 & 5 & 16 & 36 & 85 & 166 & 287\\
4 & 0 & 7 & 28 & 90 & 218 & 505 & 1013\\
5 & 0 & 9 & 48 & 169 & 532 & 1332 & 3069\\
6 & 0 & 11 & 75 & 335 & 1098 & 3300 & 8294\\
7 & 0 & 13 & 114 & 569 & 2289 & 7227 & 20878\\
8 & 0 & 15 & 162 & 979 & 4187 & 15587 & 47762\\
9 & 0 & 17 & 225 & 1531 & 7748 & 30294 & 105963\\
10 & 0 & 19 & 300 & 2396 & 13034 & 58860 & 216281\\
11 & 0 & 21 & 393 & 3520 & 22079 & 105118 & 436150\\
12 & 0 & 23 & 501 & 5151 & 34866 & 188319 & 818752\\
13 & 0 & 25 & 630 & 7211 & 55223 & 315369 & 1530869\\
14 & 0 & 27 & 777 & 10039 & 82965 & 529718 & 2693703\\
15 & 0 & 29 & 948 & 13529 & 124690 & 842884 & 4736380\\
16 & 0 & 31 & 1140 & 18125 & 179921 & 1343826 & 7912036\\
17 & 0 & 33 & 1359 & 23689 & 259302 & 2050653 & 13221792\\
18 & 0 & 35 & 1602 & 30784 & 361900 & 3132029 & 21159269\\
19 & 0 & 37 & 1875 & 39236 & 504021 & 4615128 & 33879846\\
20 & 0 & 39 & 2175 & 49741 & 684067 & 6800508 & 52294099\\
21 & 0 & 41 & 2508 & 62085 & 926002 & 9727432 & 80742936\\
22 & 0 & 43 & 2871 & 77111 & 1227304 & 13904838 & 120830579\\
23 & 0 & 45 & 3270 & 94561 & 1622011 & 19387707 & 180821641\\
24 & 0 & 47 & 3702 & 115439 & 2106363 & 27001767 & 263434743\\
25 & 0 & 49 & 4173 & 139439 & 2727348 & 36822006 & 383668154\\
26 & 0 & 51 & 4680 & 167740 & 3479594 & 50140352 & 545978070\\
27 & 0 & 53 & 5229 & 199984 & 4426415 & 67056804 & 776480287\\
28 & 0 & 55 & 5817 & 237539 & 5560388 & 89530551 & 1082270541\\
29 & 0 & 57 & 6450 & 279991 & 6965069 & 117692377 & 1507214918\\
30 & 0 & 59 & 7125 & 328911 & 8630475 & 154433796 & 2062327850\\
31 & 0 & 61 & 7848 & 383825 & 10664900 & 199922976 & 2818996389\\
32 & 0 & 63 & 8616 & 446521 & 13055217 & 258327002 & 3793935067\\
33 & 0 & 65 & 9435 & 516461 & 15939574 & 329858905 & 5100110873\\
34 & 0 & 67 & 10302 & 595664 & 19301036 & 420398901 & 6762379052\\
35 & 0 & 69 & 11223 & 683524 & 23313381 & 530213298 & 8955099361\\
36 & 0 & 71 & 12195 & 782305 & 27955117 & 667445528 & 11714562366\\
37 & 0 & 73 & 13224 & 891329 & 33442128 & 832424580 & 15303928783\\
38 & 0 & 75 & 14307 & 1013119 & 39747526 & 1036240128 & 19775312348\\
39 & 0 & 77 & 15450 & 1146921 & 47136517 & 1279294291 & 25517963824\\
40 & 0 & 79 & 16650 & 1295531 & 55575883 & 1576461699 & 32605592304\\
41 & 0 & 81 & 17913 & 1458115 & 65388148 & 1928229150 & 41603514029\\
42 & 0 & 83 & 19236 & 1637756 & 76532730 & 2354275836 & 52614541111\\
43 & 0 & 85 & 20625 & 1833536 & 89398287 & 2855185938 & 66446126460\\
\end{array}
\]
\caption{Stable Betti numbers for low degree and genus}\label{fig: chart of stable values}
\end{figure}

\section{Proof of Theorem \ref{thm:summary}}\label{section:theoremproof}

Recall from Section~\ref{sec:closedsurfaces} that the differential on $CE(\mathfrak{g}_\Sigma)$ is given by the formula \[D=\tilde p\Delta+\delta\partial_{v}+\frac{\tilde v}{2}\partial^2_v+\tilde p\partial_p\partial_v.\] Our first simplification is to eliminate the term $\frac{\tilde v}{2}\partial^2_v$ using a contracting homotopy. First, we recall that the spaces of interest are

\begin{align*}
\XX_g&\coloneqq 
\mathbb{Q}[\tilde a_i, \tilde b_i]
\otimes
\Lambda[{a}_i,{b}_i]
\\
\mathcal{K}_g&\coloneqq \left(\ker\delta|_\XX \cap \ker\Delta|_{\XX}\right),
\end{align*} with Poincar\'{e} series $X$ and $K$, respectively. We further define two auxiliary subspaces of $\XX_g$:

\begin{align*}\YY_g&\coloneqq 
\mathbb{Q}[p,\tilde a_i, \tilde b_i]
\otimes
\Lambda[{a}_i,{b}_i]\\
\ZZ_g&\coloneqq 
\mathbb{Q}[p,\tilde a_i, \tilde b_i]
\otimes
\Lambda[\tilde p,{a}_i,{b}_i].\end{align*} As before, $i$ runs from 1 to $g$, and we omit the subscript when the genus is unambiguous.

\begin{lemma}\label{lemma:chainhomotopy}
Define
\begin{enumerate}
\item a retraction $CE(\mathfrak{g}_\Sigma)\to \ZZ\oplus v\ZZ$, which, for $q\in \ZZ$, takes $\tilde{v}q$ to 
\[-2v\delta(q)-2v\tilde{p}\partial_p q,\] and
\item a degree $1$ chain homotopy from $CE(\mathfrak{g}_M)$ to itself, which, for $q$ in $\ZZ$, takes $v^n\tilde vq$ to
\[
\frac{2n!}{(n+2)!}v^{n+2}q.\]
\end{enumerate}
Then the inclusion of $\ZZ\oplus v\ZZ$ into $CE(\mathfrak{g})$ along with the retraction and chain homotopy constructed above constitute the data of a deformation retraction.
\end{lemma}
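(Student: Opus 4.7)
The approach is homological perturbation. Decompose the differential as $D = D_0 + \delta_1$, where $D_0 = \frac{\tilde{v}}{2}\partial_v^2$ acts only in the $v,\tilde{v}$ directions, while $\delta_1 = \tilde{p}\Delta + \delta\partial_v + \tilde{p}\partial_p\partial_v$ collects the remaining terms. The plan is first to produce an auxiliary deformation retraction of $(CE(\mathfrak{g}_\Sigma), D_0)$ onto $(\ZZ \oplus v\ZZ, 0)$, and then to deform it via the perturbation lemma; the relevant nilpotency turns out to be extremely strong, so that the perturbation series terminates after a single correction.

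To execute this plan, I would first exhibit auxiliary data $(\iota_0, \pi_0, h_0)$ for $D_0$. Since $CE(\mathfrak{g}_\Sigma) \cong \ZZ \otimes \mathbb{Q}[v] \otimes \Lambda[\tilde{v}]$ and $D_0$ acts trivially on the $\ZZ$ factor, it suffices to verify a contraction of $(\mathbb{Q}[v]\otimes\Lambda[\tilde{v}], \tfrac{\tilde{v}}{2}\partial_v^2)$ onto its cohomology $\mathbb{Q}\{1,v\}$. The stated assignment $h_0(v^n\tilde{v}) = \tfrac{2n!}{(n+2)!}v^{n+2}$ (zero on $\tilde{v}$-free elements) is checked directly from $D_0(v^k) = \tfrac{k(k-1)}{2}\tilde{v}v^{k-2}$. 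Next I would observe that $\delta_1$ preserves $\ZZ\oplus v\ZZ$---so that $D$ restricts to this subcomplex and $\iota_0$ remains a chain map---and, crucially, that $h_0\delta_1 h_0 = 0$: the image of $h_0$ lies in the $\tilde{v}$-free subspace $\ZZ\otimes\mathbb{Q}[v]$, none of the summands of $\delta_1$ produces a $\tilde{v}$, and $h_0$ vanishes on $\tilde{v}$-free elements. Consequently the perturbation-lemma series terminates immediately, yielding $h' = h_0$ (unchanged) and $\pi' = \pi_0 - \pi_0\delta_1 h_0$. Evaluating on $\tilde{v}q$ gives
\[\pi'(\tilde{v}q) \;=\; -\pi_0\bigl(\delta_1(v^2 q)\bigr) \;=\; -\pi_0\bigl(v^2\tilde{p}\Delta q + 2v\delta q + 2v\tilde{p}\partial_p q\bigr) \;=\; -2v\delta q - 2v\tilde{p}\partial_p q,\]
matching the statement.

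The main care point will be the signs. Terms arising from $\tilde{p}\Delta$ commuting across $\tilde{v}$ (which produces $\tilde{p}\tilde{v} = -\tilde{v}\tilde{p}$) and from the odd derivations $\partial_{a_j}$, $\partial_{b_j}$ moving past odd generators must cancel against each other in the verification of $\id - \iota\pi' = Dh' + h'D$ on a general element. A completely elementary alternative, bypassing the perturbation lemma formally, is to split $CE(\mathfrak{g}_\Sigma)$ into the four classes of monomials $q$, $vq$, $v^nq$ ($n\ge 2$), and $v^n\tilde{v}q$ ($n\ge 0$) for $q\in \ZZ$, and verify the deformation retract identities case by case; this is tedious but straightforward, and serves as an independent check.
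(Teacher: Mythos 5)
Your approach is correct and, since the paper dismisses the proof as ``a direct computation,'' yours is the more illuminating of the two. Decomposing $D=D_0+\delta_1$ with $D_0=\tfrac{\tilde v}{2}\partial_v^2$, observing that $D_0$ acts only on the $\mathbb{Q}[v]\otimes\Lambda[\tilde v]$ factor, contracting that factor onto $\mathbb{Q}\{1,v\}$, and then perturbing by $\delta_1$ is a clean conceptual packaging. The two key structural observations you isolate --- that $\delta_1$ preserves $\mathcal{Z}\oplus v\mathcal{Z}$ (so the inclusion stays a chain map, and in fact the perturbed inclusion is literally unchanged because $h_0\delta_1\iota_0=0$), and that $h_0\delta_1 h_0=0$ (so the perturbation series collapses after one term) --- are exactly what makes the lemma work, and your reasoning for both is right: the image of $h_0$ is $\tilde v$-free, no summand of $\delta_1$ creates a $\tilde v$, and $h_0$ annihilates $\tilde v$-free elements. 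The verification on $h_0(v^n\tilde v)=\tfrac{2n!}{(n+2)!}v^{n+2}$ against $D_0(v^k)=\tfrac{k(k-1)}{2}\tilde v v^{k-2}$ and the evaluation $\pi'(\tilde vq)=-\pi_0\delta_1(v^2q)=-2v\delta q-2v\tilde p\partial_p q$ both check out.

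One thing worth flagging carefully, beyond the Koszul signs you already mention: the sign in $\pi'=\pi_0-\pi_0\delta_1 h_0$ depends delicately on the convention used in the basic perturbation lemma. With the relation $1-\iota_0\pi_0=D_0h_0+h_0D_0$ (which is the one satisfied here, as one can see from $(D_0h_0+h_0D_0)(v^n\tilde vq)=v^n\tilde vq$), many standard references write the perturbed projection as $\pi_0(1-\delta_1 h_0)^{-1}=\pi_0+\pi_0\delta_1 h_0+\cdots$, which would produce the opposite sign; the $-$ sign you need corresponds to the opposite convention (or equivalently to replacing $h_0$ by $-h_0$). Since you checked the resulting formula directly on $\tilde v q$ this does not affect the correctness of your answer, but a reader applying the perturbation lemma as a black box could be led astray, so it is worth either pinning down the convention explicitly or, as you suggest at the end, simply folding the perturbation-lemma computation into the elementary case-by-case verification of $\mathrm{id}-\iota\pi'=Dh'+h'D$ on $v^nq$, $v^n\tilde vq$. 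You should also record the side conditions $h_0\iota_0=0$, $\pi_0 h_0=0$, $h_0^2=0$, which hold trivially (the image of $h_0$ lies in $v^{\ge 2}\mathcal{Z}$), since they are used in asserting that the perturbed inclusion is unchanged.
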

The proof is a direct computation.

\begin{remark}
It follows from this step alone that $H_i(B_k(\Sigma_g);\mathbb{Q})=0$ for $i>k+1$.
\end{remark}

Next, we eliminate the term $\tilde p\partial_p\partial_v$ from the differential by deforming $\ZZ\oplus v\ZZ$ onto the subspace $\YY\oplus v\tilde{p}\YY\oplus v\XX$ endowed with a twisted differential. See Figure~\ref{fig: inclusion diagram} for a graphical summary of the differentials and one of the maps used in the deformation retraction.

\begin{lemma}\label{lemma:chainhomotopy2}
Define
\begin{enumerate}
\item a degree $-1$ linear operator $d$ on $\YY\oplus v\tilde{p}\YY\oplus v\XX$ by 
\begin{align*}
d|_\YY(q)&=
-\frac{p\Delta}{n+1}(v\tilde{p}\Delta + \delta)q
\\
d|_{v\tilde{p}\YY}(q)&=
-\frac{p\Delta}{n+1}\delta(q) 
\\
d|_{v\XX}(q)&=\left(\frac{1}{v}\delta+\tilde p\Delta\right)q,
\end{align*} where $n=n(q)$ is the largest nonnegative integer such that $q=p^nq'$;
\item a linear map $f:\YY\oplus v\tilde p\YY\oplus v\XX\to \ZZ\oplus v\ZZ$ by \begin{align*}
f|_\YY(q)&=\left(1-\frac{p}{n+1}v\Delta\right)q\\
f|_{v\tilde p\YY}(q)&=\left(1-\frac{p}{\tilde p(n+1)}\delta\right)q\\
f|_{v\XX}(q)&=q;
\end{align*}
\item a linear map $g:\ZZ\oplus v\ZZ\to \YY\oplus v\tilde p\YY\oplus v\XX$ by \begin{align*}
g|_{\YY\oplus v\tilde p \YY}(q)&=q\\
g|_{v\YY}(q)&=\begin{cases}
q&\quad n=0\\
0&\quad \text{else}
\end{cases}\\
g|_{\tilde p \YY}(q)&=-\frac{p}{n+1}\left(v\Delta-\frac{1}{\tilde p}\delta\right)q
\end{align*}
\item a degree $1$ linear operator on $\ZZ\oplus v\ZZ$ by stipulating that \begin{align*}
H|_{\tilde p\YY}&=\frac{p}{\tilde p(n+1)}v
\end{align*} and extending by zero.
\end{enumerate}
Then $d$ is a differential, $f$ and $g$ are chain maps, and $H$ is a chain homotopy $\id\sim fg.$
\end{lemma}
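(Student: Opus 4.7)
The plan is to verify each of the four assertions by direct calculation, working in the decomposition $\ZZ\oplus v\ZZ \cong \YY\oplus \tilde p\YY \oplus v\YY \oplus v\tilde p\YY$ together with the further splitting $\YY \cong \mathbb{Q}[p]\otimes \XX$, and exploiting four fundamental relations: $\delta^2=0$, the commutativity $[\delta,\Delta]=0$, and the sign identities $\delta(\tilde p\,\omega)=-\tilde p\,\delta\omega$ and $\Delta(\tilde p\,\omega)=\tilde p\,\Delta\omega$ (the latter two holding because $\delta$ and $\Delta$ are graded derivations that annihilate $\tilde p$, and $|\tilde p|=1$).

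First I would verify $d^2=0$ on each of the three summands of $\YY\oplus v\tilde p\YY\oplus v\XX$. On $\YY$, expanding $d^2(q)$ produces two potentially nonzero contributions proportional to $p^2v\tilde p\,\Delta^3\delta q$, arising respectively from the $v\tilde p\YY$-component and the $\YY$-component of $d(q)$; these cancel against each other after one transits $\delta$ past $\tilde p$, and the remaining $\delta^2 q$ contribution vanishes outright. The calculations on $v\tilde p\YY$ and $v\XX$ are analogous but shorter, each reducing after a single application of $\delta^2=0$ and $[\delta,\Delta]=0$.

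Next I would verify that $(f,g,H)$ witnesses a deformation retraction. The identity $gf=\id$ is immediate: on both $\YY$ and $v\tilde p\YY$ the correction term in $f$ lands in something of strictly positive $p$-degree in $v\YY$ respectively $v\tilde p\YY$, each of which is annihilated by $g$. The heart of the proof is $\id-fg=DH+HD$. Since $H$ is supported on $\tilde p\YY$, the identity on $\YY$ and on $v\tilde p\YY$ reduces to $fg=\id$ there, which one reads off the formulas. For $q=\tilde p p^n q''\in\tilde p\YY$ with $q''\in\XX\subset\YY$ free of $p$-factors, the key observation is that $D(q)=0$ because $\tilde p^2=0$ kills $\tilde p\Delta q$ and the absence of $v$ kills the other two terms; meanwhile $DH(q)$ has a principal contribution of $q$ coming from $\tilde p\partial_p\partial_v$, and the error terms $\tfrac{v\tilde p p^{n+1}}{n+1}\Delta q''+\tfrac{p^{n+1}}{n+1}\delta q''$ that remain match the negative of $fg(q)$ after a delicate cancellation of the two contributions $\pm\tfrac{vp^{n+2}}{(n+1)(n+2)}\delta\Delta q''$ produced by applying $f$ to the two summands of $g(q)$. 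The case of $v\YY$ splits by whether $n=0$ (where $fg=\id$) or $n\geq 1$ (where the homotopy correction is needed) and proceeds along the same lines.

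Finally, the chain map conditions $fd=Df$ and $gD=dg$ are verified summand by summand using the same repertoire of moves. The main obstacle throughout is bookkeeping: one must track the $n$-dependent scalars $\tfrac{1}{n+1}$ and $\tfrac{1}{n+2}$ that shift each time a factor of $p$ is introduced, together with the signs that accumulate as $\delta$ is commuted past the odd factors $v$ and $\tilde p$. No single verification is conceptually difficult, but the cancellations are tight and the proliferation of cases demands consistent sign conventions.
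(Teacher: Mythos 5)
The overall strategy of verifying each assertion summand-by-summand by direct computation is exactly the paper's approach (the paper merely writes out the verification of $gD=dg$ and leaves the rest to the reader, so your proposal is in fact more detailed). However, there is a genuine error in your analysis of the homotopy identity $\id-fg=DH+HD$ on the summands $\YY$ and $v\tilde p\YY$: you claim that because $H$ is supported on $\tilde p\YY$, the identity there ``reduces to $fg=\id$,'' but this is false. Indeed, for $q=p^nq''\in\YY$ one has $fg(q)=q-\tfrac{1}{n+1}vp^{n+1}\Delta q''\neq q$. What actually happens is that $D$ maps $\YY$ into $\tilde p\YY$ (via $\tilde p\Delta$), on which $H$ is nonzero, so the nontrivial contribution to $DH+HD$ on $\YY$ comes from $HD$: explicitly $HD(q)=H(\tilde p p^n\Delta q'')=\tfrac{1}{n+1}vp^{n+1}\Delta q''$, which precisely matches $\id-fg$. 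Similarly on $v\tilde p\YY$ the operator $\delta\partial_v$ sends $q$ into $\tilde p\YY$ and $HD$ is again nontrivial. Your reasoning as stated would have led you to conclude the identity trivially holds where it does not, so this piece of the argument needs to be redone along the lines of your (correct) computation on $\tilde p\YY$. A second, much more minor, slip: $\Delta$ is not a graded derivation (it is a second-order operator), so your stated justification for $\Delta(\tilde p\omega)=\tilde p\Delta\omega$ is wrong, although the conclusion is correct since $\Delta$ is the composite of two odd derivations each annihilating $\tilde p$ and the two signs cancel.
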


\begin{proof} The proof is a direct, albeit tedious, computation. We leave to the reader all but the verification that $g$ is a chain map. For this, we have \begin{align*}
dg|_{\YY}&=-\frac{p\Delta}{n+1}(v\tilde{p}\Delta + \delta)\\
dg|_{v\tilde p\YY}&=-\frac{p\Delta}{n+1}\delta\\
dg|_{v\YY}&=\begin{cases}
\frac{1}{v}\delta+\tilde p\Delta&\quad n=0\\
0&\quad \text{else}
\end{cases}\\
dg|_{\tilde p \YY}&=\frac{p\Delta}{n+2}\delta\frac{p}{n+1}v\Delta -\frac{p\Delta}{n+2}(v\tilde{p}\Delta+\delta)\frac{p}{n+1}\frac{1}{\tilde p}\delta  \\
&=\frac{p^2\Delta}{(n+2)(n+1)}(\delta v\Delta-v\tilde{p}\Delta\frac{1}{\tilde p}\delta)
\\
&=0\\
gD|_{\YY}&=-\frac{p}{n+1}\left(v\Delta-\frac{1}{\tilde p}\delta\right)\tilde p\Delta\\
&=-\frac{p\Delta}{n+1}(v\tilde p\Delta+\delta)\\
gD|_{v\tilde p\YY}&=-\frac{p}{n+1}\left(v\Delta-\frac{1}{\tilde p}\delta\right)\frac{\delta}{v}\\
&=-\frac{p\Delta}{n+1}\delta\\
gD|_{v\YY}(q)&=\left(\frac{1}{v}\delta+\tilde p\Delta\right)q-\frac{p}{n+1}\left(v\Delta-\frac{1}{\tilde p}\delta\right)\frac{\tilde p}{v}\partial_pq\\
&=\left(\frac{1}{v}\delta+\tilde p\Delta\right)\left(1-\frac{p}{n+1}\partial_p\right)q\\
&=\begin{cases}
(\frac{1}{v}\delta+\tilde p\Delta)q&\quad n=0\\
0&\quad\text{else}
\end{cases}\\
gD|_{\tilde p\YY}&=0,
\end{align*} 
where we have made use of the fact that if $n=0$, then $\partial_pq=0$, while if $n\ne 0$, we have
\[\left(\frac{p}{n+1}\partial_p\right)q=\frac{p}{n(\partial_pq)+1}\partial_pq=\frac{p}{n(q)}\partial_pq=
q.\]
Thus, $gD=dg$.

\end{proof}

\begin{figure}
\[
\xymatrix{
&&v\XX\ar[dll]_{\frac{\delta}{v}}\ar[drr]^{\tilde{p}\Delta}\ar@{=>}[ddd]|(.36)\hole^{\id}
\\
\YY\save!L(.5)\ar@(dl,ul)^{\frac{-p\delta\Delta}{n+1}}\restore\ar[rrrr]_(.30){\frac{-vp\tilde{p}\Delta^2}{n+1}}\ar@{=>}[ddrr]_{\frac{-vp\Delta}{n+1}}\ar@{=>}[ddd]_{\id}&&&&v\tilde{p}\YY\ar@{=>}[ddll]^{\frac{-p\delta}{(n+1)\tilde{p}}}\ar@{=>}[ddd]^{\id}\save!R(.7)\ar@(dr,ur)_{\frac{-p\delta\Delta}{n+1}}\restore
\\\\
&&v\YY\ar[dll]_{\frac{\delta}{v}}\ar[drr]^{\tilde{p}\Delta}\ar[dd]^{\frac{\tilde{p}\partial_p}{v}}
\\	
\YY\ar[drr]_{\tilde{p}\Delta}&&&&v\tilde{p}\YY\ar[dll]^{\frac{\delta}{v}}
\\
&&\tilde{P}\YY
}
\]
\caption{The inclusion $f$ of $\YY\oplus v\tilde{p}\YY\oplus v\XX$ with the twisted differential into $\ZZ\oplus v\ZZ$. Single arrows indicate differentials in the domain and codomain of $f$ and double arrows indicate components of $f$.}
\label{fig: inclusion diagram}
\end{figure}

\begin{corollary}\label{corollary:stable complex}
For $i<k$,
\[
H_i(B_k(\Sigma_g);\mathbb{Q})\cong H_i(\XX\oplus \XX[3],\,d_\mathrm{st})
\]
where $d_\mathrm{st}$ has components $\Delta^2[3]: \XX\to \XX[3]$, $\delta\Delta: \XX\to \XX$, and $-\delta\Delta:\XX[3]\to \XX[3]$. In particular, the dimension of this vector space is independent of $k$. 
\end{corollary}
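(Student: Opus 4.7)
The plan is to exhibit, in the range $i\le k$, an explicit chain isomorphism between the bidegree-$(*,k)$ piece of the twisted complex $(\YY\oplus v\tilde p\YY\oplus v\XX, d)$ furnished by Lemma~\ref{lemma:chainhomotopy2} and the degree-$\le k$ truncation of the stable complex $(\XX\oplus\XX[3], d_\mathrm{st})$; agreement of homology in the strict range $i<k$ will then follow formally.

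The key observation is that every generator of $\XX=\mathbb{Q}[\tilde a_i,\tilde b_i]\otimes\Lambda[a_i,b_i]$ has homological degree equal to its weight, so the same holds for every monomial of $\XX$. This pins down the bidegree-$(i,k)$ piece of $\YY=\mathbb{Q}[p]\otimes\XX$ as the single summand $p^{k-i}\XX_i$, that of $v\tilde p\YY$ as $v\tilde p\,p^{k-i}\XX_{i-3}$, and that of $v\XX$ as $v\XX_{i-2}$ of total weight $i-1$, which contributes only when $i=k+1$ and hence vanishes for $i\le k$. I would then package the two surviving summands into the rescaled map
\[\phi(x,y)=\frac{(-1)^{k-i}}{(k-i)!}\bigl(p^{k-i}x+v\tilde p\,p^{k-i}y\bigr),\qquad (x,y)\in(\XX\oplus\XX[3])_i,\]
and verify the chain map identity $d\phi=\phi\,d_\mathrm{st}$. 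Using that $\Delta$ commutes with $p,v,\tilde p$, and with $\delta$, the formula $d|_\YY(p^n q')=-\frac{p^{n+1}}{n+1}(\delta\Delta q'+v\tilde p\Delta^2 q')$ combined with the recursion $\lambda_{i-1}=-\lambda_i/(k-i+1)$ satisfied by $\lambda_i=(-1)^{k-i}/(k-i)!$ makes the $\YY\to\YY$ and $\YY\to v\tilde p\YY$ components of $d\phi$ exactly reproduce the $\delta\Delta$ and $\Delta^2$ pieces of $\phi\,d_\mathrm{st}$.

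The main obstacle is the sign on the $v\tilde p\YY\to v\tilde p\YY$ component. Since $\delta$ is an odd derivation and $v\tilde p$ has odd total degree $3$, the Koszul rule yields $\delta(v\tilde p\,y)=-v\tilde p\,\delta y$; this extra minus flips the sign in $d|_{v\tilde p\YY}$ to $+\frac{v\tilde p\,p^{n+1}}{n+1}\delta\Delta y$, so that the same rescaling now produces exactly the $-\delta\Delta$ on $\XX[3]$ appearing in $d_\mathrm{st}$. Once $\phi$ is established as an isomorphism of complexes truncated to degrees $\le k$, cycles at degree $i$ and boundaries from degree $i+1$ both lie inside this range whenever $i<k$, so the homologies agree in the claimed range and the resulting dimension is manifestly independent of $k$.
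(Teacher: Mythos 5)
Your proposal is correct and follows essentially the same route as the paper: both exploit that $\XX$ is concentrated in degree $=$ weight to identify the weight-$k$, degree-$i$ piece of the complex of Lemma~\ref{lemma:chainhomotopy2} as $p^{k-i}\XX_i\oplus p^{k-i}v\tilde p\XX_{i-3}$ for $i\le k$, and then rescale away the $-\tfrac{p}{n+1}$ factors; the paper phrases the rescaling tersely as the change of variables $p^n\mapsto (-1)^n\tfrac{p^n}{n!}$, whereas you write out the resulting chain isomorphism $\phi$ and check the Koszul sign on the $v\tilde p\YY$ component explicitly, which is the same computation.
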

\begin{proof}
The degree less than weight subspace of $\YY\oplus v\tilde p\YY\oplus v\XX$ is $$p\YY\oplus pv\tilde p\YY\cong \bigoplus_{n=1}^\infty p^n\XX\oplus p^nv\tilde p \XX.$$ It is clear from the formulas that the respective dimensions of the kernel and image of $$d:p^n\XX\oplus p^nv\tilde p\XX\to p^{n+1}\XX\oplus p^{n+1}v\tilde p \XX$$ are independent of $n$ and therefore of $k$. Thus, for example, the Betti numbers of $B_k(\Sigma_g)$ for $i<k$ and any $k$ are computed by the complex $$
p\XX\oplus p\XX[3]\overset{-\frac{p}{2}d_\mathrm{st}}{\longrightarrow} p^2\XX\oplus p^2\XX[3]\overset{-\frac{p}{3}d_\mathrm{st}}{\longrightarrow}\cdots,
$$ which is isomorphic to $(\XX\oplus \XX[3],d_\mathrm{st})$ after the change of variables $p^n\mapsto (-1)^n\frac{p^n}{n!}$.
\end{proof}

Corollary~\ref{corollary:stable complex} asserts that we may calculate stable Betti numbers using the complex

\[
\xymatrix{
\vdots\ar[d]\ar[dr]&\vdots\ar[d]
\\
\XX_n\ar[d]_{\delta\Delta}\ar[dr]^{\Delta^2}&
\XX[3]_{n}\ar[d]^{-\delta\Delta}
\\
\XX_{n-1}\ar[d]\ar[dr]&\XX[3]_{n-1}\ar[d]
\\
\vdots&\vdots
}
\]

We will use two more lemmas for the proof of Theorem~\ref{thm:summary}. We write $\XX_{>0}\subset \XX$ for the subspace of polynomials with no constant term. 

\begin{lemma}\label{lemma: acyclic}
The cochain complex $(\XX_{>0},\delta)$ is acyclic. More specifically, there is an operator $H$ on $\XX$ that restricts to a cochain nullhomotopy of $(\XX_{<0},\delta)$, also called $H$, and this operator has the property that $\Delta H\Delta H$ and $H\Delta H\Delta$ differ by an invertible linear transformation.
\end{lemma}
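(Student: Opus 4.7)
The plan is to establish the three claims of the lemma sequentially.

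\textbf{Acyclicity and the nullhomotopy.} The complex $(\XX,\delta)$ factors as a tensor product over $j=1,\ldots,g$ of sub-Koszul complexes of the form $\Lambda[a_j]\otimes\mathbb{Q}[\tilde a_j]$ and $\Lambda[b_j]\otimes\mathbb{Q}[\tilde b_j]$, in each of which the corresponding component of $\delta$ sends the exterior generator to its polynomial partner. Each such factor has cohomology $\mathbb{Q}$ concentrated in internal degree zero; by K\"unneth, $H^*(\XX,\delta)\cong \mathbb{Q}$ sits in degree $0$, and so $(\XX_{>0},\delta)$ is acyclic. For the explicit nullhomotopy, I would take the naive ``raising'' operator $H_0 = \sum_j(a_j\partial_{\tilde a_j}+b_j\partial_{\tilde b_j})$ and verify by direct computation that $\{\delta,H_0\}=N$, where $N$ is the total weight operator counting all generators. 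Since $N$ commutes with both $\delta$ and $H_0$ and is invertible on $\XX_{>0}$, the operator $H := N^{-1}H_0$, extended by zero on the constant subspace, satisfies $\{\delta,H\}=\mathrm{id}$ on $\XX_{>0}$ and is defined on all of $\XX$.

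\textbf{The invertibility comparison.} For the third claim, I would compute the commutator $[\Delta,H_0]$; a direct expansion yields the mixed operator $\Delta' := \sum_j(\partial_{b_j}\partial_{\tilde a_j}-\partial_{a_j}\partial_{\tilde b_j})$. Combining this with the elementary relations $[N,\Delta]=-2\Delta$, $H_0^2=0$ (whence $H^2=0$), and $[\delta,\Delta]=0$, one can rewrite each of $\Delta H\Delta H$ and $H\Delta H\Delta$ as a rational expression in the quartet $\Delta,H_0,\Delta',N$. Their difference preserves each multi-weight summand of $\XX$ indexed by $(N_{a_j},N_{\tilde a_j},N_{b_j},N_{\tilde b_j})_{j=1}^g$, and one then identifies this difference as an operator that acts invertibly on each summand.

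\textbf{Main obstacle.} This third step is where the real work lies. Because $H$ involves the reciprocal $N^{-1}$, manipulations are not purely polynomial: one has to control carefully how $N$-factors shift through $\Delta$ and $\Delta'$, and generic cancellations appear to make $\Delta H\Delta H$ and $H\Delta H\Delta$ coincide on many homogeneous subspaces, so the invertibility must come from a residual term that one identifies. My fallback strategies are (i) to decompose $\XX$ into its finite-dimensional multi-weight eigenspaces and check invertibility on each explicitly, or (ii) to exploit the $\mathfrak{sp}_{2g}$-symmetry on $\XX$ arising from the symplectic form $\omega = \sum_j a_jb_j$, reframing $\Delta$ as a Laplacian-like operator and reducing invertibility to a representation-theoretic statement about operators in the enveloping algebra.
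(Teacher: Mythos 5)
Your construction of $H$ is the same as the paper's: your $H_0$ is the paper's $h=\sum_c c\partial_{\tilde c}$, the anticommutator $\{\delta,H_0\}$ is indeed the polynomial-degree (word-length) operator $N$, and $H=N^{-1}H_0$ is exactly the paper's $H(\alpha)=h(\alpha)/\deg\alpha$. The Koszul tensor-product argument for acyclicity is correct, though it becomes redundant once the explicit nullhomotopy is in hand.

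The genuine gap is exactly where you flag it, and it admits a one-line fix. The relation missing from your list of ``elementary relations'' is $[\Delta',\Delta]=0$, where $\Delta'=[\Delta,H_0]=\sum_j\bigl(\partial_{b_j}\partial_{\tilde a_j}-\partial_{a_j}\partial_{\tilde b_j}\bigr)$. This holds automatically because both $\Delta$ and $\Delta'$ are constant-coefficient differential operators, i.e.\ pure products of partial derivatives, which supercommute with one another. From $[[\Delta,H_0],\Delta]=0$ you get $\Delta H_0\Delta=\tfrac12(\Delta^2H_0+H_0\Delta^2)$, and combined with $H_0^2=0$ this yields the exact identity
\[
\Delta H_0\Delta H_0=\tfrac12\, H_0\Delta^2H_0=H_0\Delta H_0\Delta.
\]
Reinstating the factors of $N^{-1}$ and using that $\Delta$ lowers polynomial degree by $2$, one finds on a monomial of polynomial degree $p>4$ that $\Delta H\Delta H=\tfrac{p-4}{p}\,H\Delta H\Delta$, while both operators vanish for $p\le4$; this is precisely the asserted invertible comparison. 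Your fallback strategies --- multi-weight eigenspace decomposition and $\mathfrak{sp}_{2g}$-equivariance --- are not needed once this commutation is observed, since the rewriting in the quartet $\Delta,H_0,\Delta',N$ that you envision becomes immediate and in fact collapses to the scalar factor above.
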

\begin{proof}
It is easily seen that the operator \[h=\sum_{c}c\partial_{\tilde c}\] on $\XX$ has the property that $(\delta h+h\delta)\alpha$ is a nonzero scalar multiple of $\alpha$ for every nonzero monomial $\alpha\in\XX_{>0}$, where $c$ runs over the set $\{a_1,\ldots, a_g,b_1,\ldots, b_g\}$.  We obtain $H$ by setting $H(\alpha)=\frac{h(\alpha)}{\deg \alpha}$ where $\deg \alpha$ is the polynomial degree of the monomial $\alpha$ (not its homological degree), and $H|_{\XX_0}=h|_{\XX_0}=0$.

To see the desired behavior with respect to $\Delta$, first calculate that
\[
[h,\Delta]=\sum_j \partial_{a_j}\partial_{\tilde{b}_j}-\partial_{b_j}\partial_{\tilde{a}_j}.
\]
Then both $\Delta$ and $[h,\Delta]$ are partial differentiation operators so $[[h,\Delta],\Delta]=0$, or in other words $\Delta h\Delta = \frac{1}{2}(\Delta^2 h + h\Delta^2)$. Then since $h^2=0$, we have
\[
\Delta h\Delta h = \frac{1}{2}(h\Delta^2 h + \Delta^2 h^2)=\frac{1}{2}(h\Delta^2 h + h^2\Delta^2 )
=
h\Delta h\Delta.
\]
In polynomial degree at most $4$, $h\Delta h\Delta$ is manifestly zero since $h$ kills anything of nonpositive polynomial degree, and so the same is true for $\Delta h\Delta h$. Then $H\Delta H\Delta = 0 = \Delta H\Delta H$ on elements of $\XX$ of degree at most $4$.

In polynomial degree greater than $4$, since $\Delta$ lowers degree by two, applying $\Delta H\Delta H$ to a monomial of degree $p$ yields $\frac{p-4}{p}H\Delta H\Delta$.
\end{proof}

Acyclicity will be used immediately; the specific commutation relation between $H$ and $\Delta$ will be used below in the proof of Lemma~\ref{lemma: good recurrence}.

\begin{lemma}
There is an isomorphism $\ker d_\mathrm{st}\cong \ker(\delta\Delta\oplus\delta\Delta[3])$ of bigraded vector spaces.
\end{lemma}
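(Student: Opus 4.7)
The plan is to construct an explicit ``shearing'' isomorphism using the nullhomotopy $H$ from Lemma~\ref{lemma: acyclic}. The intuition is that $\ker d_\mathrm{st}$ and $\ker(\delta\Delta \oplus \delta\Delta[3])$ differ only by the coupling condition $\delta\Delta y = \Delta^2 x$ required in the former, and $H$ supplies a canonical preimage of $\Delta^2 x$ under $\delta\Delta$, namely $H\Delta x$. Absorbing this preimage into $y$ should convert the coupled kernel condition into an uncoupled one.

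Concretely, I would define the bigraded linear automorphism
\[
\phi : \XX \oplus \XX[3] \longrightarrow \XX \oplus \XX[3], \qquad \phi(x, y) = (x, y - H\Delta x),
\]
with two-sided inverse $\psi(x, y') = (x, y' + H\Delta x)$. Since $H$ lowers homological degree by $1$ and $\Delta$ lowers it by $2$, the composition $H\Delta$ lowers degree by exactly $3$, so $H\Delta x$ sits in $\XX[3]$ at the homological degree of $y$; hence $\phi$ preserves the bigrading on the ambient vector space. The substance of the proof is then to check that $\phi$ restricts to a bijection between the two kernels.

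The key identity to establish is $\delta\Delta H\Delta x = \Delta^2 x$ whenever $\delta\Delta x = 0$. Since $[\delta, \Delta] = 0$, this reduces to $\Delta\, \delta H\Delta x = \Delta^2 x$; and the nullhomotopy relation $\delta H + H\delta = \mathrm{id}$ on $\XX_{>0}$ together with the hypothesis $\delta\Delta x = 0$ yields $\delta H \Delta x = \Delta x$, from which the identity follows. Granted this, for $(x, y) \in \ker d_\mathrm{st}$ one has $\delta\Delta(y - H\Delta x) = \delta\Delta y - \Delta^2 x = 0$, so $\phi(x,y)$ lies in $\ker(\delta\Delta \oplus \delta\Delta[3])$; the analogous verification that $\psi$ lands in $\ker d_\mathrm{st}$ is immediate.

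The main technical wrinkle I anticipate concerns the edge case in which $\Delta x \notin \XX_{>0}$, since the nullhomotopy identity only holds on that subspace. This case is benign, however: when $\Delta x$ is a constant, $H\Delta x$ vanishes (the operator $h$, and hence $H$, is zero in polynomial degree $0$) and $\Delta^2 x$ vanishes because $\Delta$ annihilates $\XX_0$, so the identity holds trivially. With this edge case disposed of, $\phi$ and $\psi$ are mutually inverse bigraded isomorphisms of the indicated kernels.
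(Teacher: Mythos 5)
Your argument is correct and takes essentially the same approach as the paper: both produce a shearing isomorphism by absorbing into the second coordinate a $\delta$-preimage of (the nonconstant part of) $\Delta^2 x$, which acyclicity of $(\XX_{>0},\delta)$ supplies. Where the paper fixes an abstract section $s$ of $\delta$ on $\im\delta$ and shears by $sf(\Delta q)$, you shear by the explicit $H\Delta x$; this is an inessential difference, and your treatment of the constant-term edge case is correct.
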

\begin{proof}
We have a canonical isomorphism $\XX\cong\XX_{>0}\oplus\mathbb{Q}$, which we write as $q\mapsto (f(q),c(q))$. Clearly $\im\delta\subset\XX_{>0}$, and we fix a section $s:\im \delta\to \XX_{>0}$ of $\delta$.

Now, observe that if $(q,r)\in \XX\oplus \XX[3]$ is annihilated by $d_\mathrm{st}$, then $\Delta q\in\ker\delta$, and \begin{align*}\delta\Delta(sf(\Delta q)-r)
&=\Delta\delta sf(\Delta q)-\delta\Delta r\\
&=\Delta f(\Delta q)-\delta\Delta r\\
&=\Delta^2q-\Delta c(\Delta q)-\delta\Delta r\\
&=\Delta^2 q-\delta\Delta r
\end{align*}
which is the projection to the second factor of $d_\mathrm{st}(q,r)$ and hence zero. 
Thus, the assignment $(q,r)\mapsto (q, sf(\Delta q)-r)$ defines a linear map $\ker d_\mathrm{st}\to \ker (\delta\Delta\oplus\delta\Delta[3])$. This map is an isomorphism with inverse given by the same formula.
\end{proof}

\begin{lemma}\label{lemma: ker dD in terms of K}
The Poincar\'e series for $\ker\delta\Delta$ on $\XX$ satisfies
\[
\poin_{\ker\delta\Delta}(t)=
\frac{1}{t(1+t)}\big[
(1+t)K - 1 + t^2X
\big]
\]
\end{lemma}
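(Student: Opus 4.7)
The plan is to extract $\poin_{\ker\delta\Delta}$ by combining two short exact sequences associated to $\Delta$ with the acyclicity of $(\XX_{>0},\delta)$ from Lemma~\ref{lemma: acyclic}. Since $\delta$ and $\Delta$ commute and $\delta^2=0$, both $\ker\Delta$ and $\im\Delta$ are $\delta$-stable. Writing $I=\ker\delta\cap\im\Delta$, the first sequence
\[
0\to\ker\Delta\to\XX\xrightarrow{\Delta}\im\Delta[2]\to 0
\]
is then a short exact sequence of $\delta$-cochain complexes, the shift $[2]$ accounting for $|\Delta|=-2$. The second,
\[
0\to\ker\Delta\to\ker(\delta\Delta)\xrightarrow{\Delta}I[2]\to 0,
\]
I only need at the level of graded vector spaces, and it immediately gives
\[
\poin_{\ker\delta\Delta}=\poin_{\ker\Delta}+t^2\poin_I.
\]

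Next I would feed the first sequence into the long exact sequence in $\delta$-cohomology. By Lemma~\ref{lemma: acyclic}, $H(\XX,\delta)\cong\mathbb{Q}$ concentrated in degree zero, so outside a narrow band of low degrees the connecting map furnishes isomorphisms $H^{i-3}(\im\Delta,\delta)\cong H^i(\ker\Delta,\delta)$ for $i\ge 3$. A careful inspection of the boundary, using that $\im\Delta$ is concentrated in non-negative homological degree, yields $H^0(\ker\Delta,\delta)\cong\mathbb{Q}$ and $H^1(\ker\Delta,\delta)=H^2(\ker\Delta,\delta)=0$. Packaged as Poincar\'e series this condenses to the single identity
\[
\poin_{H(\ker\Delta,\delta)}(t)=1+t^3\poin_{H(\im\Delta,\delta)}(t).
\]

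To close, I would apply the general formula $\poin_{H(V,d)}=(1+t^{|d|})\poin_{\ker d}-t^{|d|}\poin_V$ from Section~\ref{subsec: conventions}, with $|\delta|=1$, separately to the complexes $(\ker\Delta,\delta)$ and $(\im\Delta,\delta)$:
\[
\poin_{H(\ker\Delta,\delta)}=(1+t)K-t\poin_{\ker\Delta},\qquad \poin_{H(\im\Delta,\delta)}=(1+t)\poin_I-t\poin_{\im\Delta},
\]
together with the rank-nullity relation $\poin_{\im\Delta}=t^{-2}(X-\poin_{\ker\Delta})$ for the degree $-2$ operator $\Delta$. Substituting these into the long-exact-sequence identity and collecting terms, the unknown quantities $\poin_{\ker\Delta}$ and $\poin_I$ combine exactly into $\poin_{\ker\Delta}+t^2\poin_I$, which is $\poin_{\ker\delta\Delta}$ by the second short exact sequence; solving for it produces the asserted formula. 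The real care point, and the only genuine obstacle, is the degree bookkeeping in the boundary of the long exact sequence: the constant $-1$ in the numerator originates from $H(\XX,\delta)\cong\mathbb{Q}$, and the term $+t^2X$ from the rank-nullity substitution for $\poin_{\im\Delta}$.
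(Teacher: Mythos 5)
Your argument is correct, but it follows a genuinely different route from the paper's. The paper projects $\ker\delta\Delta$ by $\delta$: the kernel of that projection is $\ker\delta$, and by the acyclicity of $(\XX_{>0},\delta)$ the image $\ker\Delta\cap\im\delta$ is exactly the augmentation ideal of $\mathcal{K}$, giving $\poin_{\ker\delta\Delta}=\frac{1}{t}(K-1)+\poin_{\ker\delta}$ directly; a single application of the homology--Poincar\'e formula to $(\XX,\delta)$ then yields $\poin_{\ker\delta}=\frac{1+tX}{1+t}$ and finishes. You instead project by $\Delta$, splitting $\poin_{\ker\delta\Delta}=\poin_{\ker\Delta}+t^2\poin_I$ with $I=\ker\delta\cap\im\Delta$, and recover the two new unknowns from the long exact $\delta$-cohomology sequence of $0\to\ker\Delta\to\XX\to\im\Delta[2]\to 0$, using the homology--Poincar\'e formula twice and $\Delta$'s rank--nullity $\poin_{\im\Delta}=t^{-2}(X-\poin_{\ker\Delta})$. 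Both routes rest on Lemma~\ref{lemma: acyclic} and the Poincar\'e-series formula from Section~\ref{subsec: conventions}; the paper's is shorter and avoids the LES bookkeeping (in particular it never needs $H^1=H^2=0$ for $\ker\Delta$), while your version isolates $\Delta$ cleanly and makes the provenance of the $-1$ and $+t^2X$ terms transparent. One small remark: the isomorphism $H^{i-3}(\im\Delta,\delta)\cong H^{i}(\ker\Delta,\delta)$ actually already holds for $i\ge 2$, not just $i\ge 3$; this is harmless here since you verify $H^2(\ker\Delta,\delta)=0$ separately, and indeed that vanishing is exactly what the $i=2$ case of the isomorphism gives you.
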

\begin{proof}
Consider $x\in \ker\delta\Delta\subset \XX$. Then $\delta x$ is in $\ker\Delta\cap \im\delta$ which differs from $\ker\Delta\cap \ker\delta$ only in that the latter also contains the class of $1$. Furthermore, if $\delta x =\delta x'$ then $x-x'$ is in $\ker\delta$. Then
\[
\poin_{\ker\delta\Delta}(t)=
\frac{1}{t}(K-1) + \poin_{\ker \delta}(t).
\]
Lemma~\ref{lemma: acyclic} implies that $\poin_{H(\XX,\delta)}(t)=1$, which implies by the formula for the Poincar\'e series of the homology of a complex that
\[
\poin_{\ker\delta}(t)=\frac{1+tX}{1+t},
\]
and then expansion completes the proof.
\end{proof}

\begin{proof}[Proof of Theorem \ref{thm:summary}]
We have \begin{align*}\poin_\mathrm{st}(t)&=\poin_{H(\XX\oplus\XX[3],d_\mathrm{st})}(t)\\&=\poin_{H(\XX\oplus\XX[3],\delta\Delta\oplus\delta\Delta[3])}(t)\\&=\poin_{H(\XX,\delta\Delta)}(t)+\poin_{H(\XX[3],\delta\Delta)}(t)\\&=(1+t^3)\poin_{H(\XX,\delta\Delta)}(t).\end{align*} 
Then combining this formula with
\[
\poin_{H(\XX,\delta\Delta)}(t)=\frac{1+t}{t}\poin_{\ker\delta\Delta}(t)-\frac{1}{t}X
\]
and Lemma~\ref{lemma: ker dD in terms of K} yields the desired result.

The unstable Betti number $\beta_i(B_i(\Sigma_g))$ is computed by taking kernel modulo image in the diagram $$\xymatrix{
&v\XX\ar[dl]_-{\frac{1}{v}\delta}\ar[dr]^-{\tilde p\Delta}\\
\XX\ar[d]_-{\delta\Delta}\ar[drr]^-{\Delta^2}&&v\tilde p\XX\ar[d]^-{-\delta\Delta}\\
p\XX&&pv\tilde p\XX
}$$ so that \begin{align*}
\poin_{0}(t)&=\poin_{\ker d_\mathrm{st}}(t)-\poin_{\im (\frac{1}{v}\delta\oplus\tilde p\Delta)}(t)\\
&=\poin_{\ker \delta\Delta\oplus \ker\delta\Delta[3]}(t)-t^{-1}(t^2X-t^2K)\\
&=(1+t^3)\poin_{\ker\delta\Delta}(t)-t(X-K).
\end{align*}
In this case, applying Lemma~\ref{lemma: ker dD in terms of K} and conducting algebraic simplifications yields the result.

Since the unstable Betti number $\beta_i(B_{i-1}(\Sigma_g))$ is computed as the kernel of $\frac{1}{v}\delta\oplus\tilde p\Delta$ acting on $v\XX$, the third claim is immediate.
\end{proof}

\section{Proof of Theorem~\ref{thm: Kg calculation}}\label{section:lemmasproof}

The purpose of this section is to calculate the Poincar\'e series of $\mathcal{K}$, the simultaneous kernel of $\delta$ and $\Delta$, which is used in Section~\ref{sec:closedsurfaces} to calculate the stable and unstable Betti numbers in the case of a closed surface. 

For $g\geq0$ and $n>0$ we write \[\mathcal{V}(g,n)=\{(q,r)\in \XX_g\oplus \XX_g[3]:\Delta^n q = \delta\Delta^{n-1}r,\,\Delta^n r = 0\},\] and we adopt the convention that $\mathcal{V}(g,0)=0$. We also set $\mathcal{S}=\mathbb{Q}[\tilde a,\tilde b],$ where $\tilde a$ and $\tilde b$ have degree 2. We write $V_{g,n}$ and $S$ for the respective Poincar\'{e} series.

These spaces are related to the spaces of interest as follows:

\begin{lemma}\label{lemma:exact_sequence}
For $g\geq0$, there is an exact sequence \[0\longrightarrow \mathcal{K}_{g+1}[1]\longrightarrow \mathcal{S}\otimes \mathcal{V}(g,1)[1]\longrightarrow \mathcal{K}_{g},\] where the rightmost arrow is the composite \[\mathcal{S}\otimes \mathcal{V}(g,1)[1]\xrightarrow{\tilde a,\tilde b= 0}\mathcal{V}(g,1)[1]\xrightarrow{(q,r)\mapsto \delta q}\mathcal{K}_g.\]
\end{lemma}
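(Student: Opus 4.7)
My plan is to dissect $\mathcal{K}_{g+1}$ according to the contribution of the new generators $\alpha=a_{g+1}$, $\beta=b_{g+1}$, $\tilde\alpha=\tilde a_{g+1}$, $\tilde\beta=\tilde b_{g+1}$ coming from the extra genus. Under the tensor decomposition $\XX_{g+1}\cong\mathcal{S}\otimes\XX_g\otimes\Lambda[\alpha,\beta]$, every element has a unique expansion $w = P + \alpha Q + \beta R + \alpha\beta S$ with $P,Q,R,S\in\mathcal{S}\otimes\XX_g$. Accordingly, $\delta_{g+1} = \delta_g + d_K$, where $d_K$ is the odd derivation with $d_K(\alpha)=\tilde\alpha$, $d_K(\beta)=\tilde\beta$ and vanishing on $\mathcal{S}\otimes\XX_g$, and $\Delta_{g+1} = \Delta_g + \partial_\beta\partial_\alpha$.

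Expanding $\delta_{g+1} w = 0 = \Delta_{g+1} w$ and collecting coefficients along the $\Lambda[\alpha,\beta]$-basis identifies $\mathcal{K}_{g+1}$ with the space of tuples $(P,Q,R,S)$ satisfying
\[
S = -\Delta_g P,\quad \Delta_g Q = \Delta_g R = 0,\quad \delta_g P + \tilde\alpha Q + \tilde\beta R = 0,\quad \delta_g Q = \tilde\beta\Delta_g P,\quad \delta_g R = -\tilde\alpha\Delta_g P;
\]
the residual conditions $\delta_g S = 0$ and $\Delta_g^2 P = 0$ follow automatically by applying $\Delta_g$ to the fourth equation, using $[\delta_g,\Delta_g]=0$, and using that $\tilde\alpha,\tilde\beta$ are nonzerodivisors in $\mathcal{S}\otimes\XX_g$. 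As a byproduct, $S$ has no constant-in-$\XX_g$ component (since $(\delta_g Q)_0 = 0$), so $\Delta_g P = -S$ lies in $\im\delta_g$ by Lemma~\ref{lemma: acyclic}.

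I would then define $\phi\colon \mathcal{K}_{g+1}[1]\to\mathcal{S}\otimes\mathcal{V}(g,1)[1]$ by $\phi(w) = (P, r)$, where $r\in\mathcal{S}\otimes\XX_g$ is extracted from $(Q,R)$ via the Koszul data of the decomposition. Concretely, working componentwise in $\mathcal{S}$-degree, one takes $r_{ij} = Q_{i,j+1}$, equivalently $\tilde\beta r = Q - Q|_{\tilde\beta=0}$; then $r$ has total degree $|P|-3$ and the relations $\delta_g r = -S$, $\Delta_g r = 0$ follow at once from the fourth and second equations above. Injectivity of $\phi$ is clear because $w$ is determined by $(P,Q,R,S)$, which is determined by $(P, r)$ together with the Koszul splitting. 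The composite to $\mathcal{K}_g$ sends $w$ to $\delta_g P_{00}$, which vanishes by the $\tilde\alpha=\tilde\beta=0$ specialization of the third equation. For exactness at the middle, given $(q,r)\in\mathcal{S}\otimes\mathcal{V}(g,1)$ with $\delta_g q_{00}=0$, we set $P = q$, use $r$ together with a Koszul splitting of $-\delta_g q$ (available because $\delta_g q$ lies in $(\tilde\alpha,\tilde\beta)\subset\mathcal{S}\otimes\XX_g$) to recover $(Q, R)$, and set $S = -\Delta_g P$; one then checks that the system holds, so $w\in\mathcal{K}_{g+1}$ and $\phi(w) = (q, r)$.

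The main technical obstacle is verifying that $\phi$ is canonically defined (and that the reconstructed $(Q,R)$ in the inverse direction satisfies $\Delta_g Q = \Delta_g R = 0$, which I would check by computing $\Delta_g$ of the explicit Koszul formulas using $\Delta_g q = \delta_g r$, $\Delta_g r = 0$, and $\delta_g^2 = 0$); functoriality of the Koszul resolution of $\mathbb{Q}$ over $\mathcal{S}$ provides the natural framework, with the componentwise formula above serving as one concrete realization.
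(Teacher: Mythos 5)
Your proposal is correct and follows essentially the same route as the paper: decompose $\XX_{g+1}$ by splitting off the last genus factor, collect the conditions $\delta w=\Delta w=0$ componentwise, and read off an injection into $\mathcal{S}\otimes\mathcal{V}(g,1)$ whose image is the kernel of the specialization-then-$\delta$ map. The only cosmetic difference is your choice of second component $r_{ij}=Q_{i,j+1}$, where the paper uses the antisymmetric combination $-q_{3,i+1,j+1}$; the two differ by the $P$-determined term $-\tfrac12\delta P_{i+1,j+1}$, so both give valid parameterizations of the kernel.
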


The proof (as well as the definition of the leftmost map) uses a substantial amount of new notation and is deferred to the end of the section. For the present, we derive the following consequence:

\begin{lemma}\label{lemma: good recurrence}
For $g\geq0$, the Poincar\'e series $K_{g+1}$ satisfies the recurrence relation 
\[
tK_{g+1}=1+ t^3 + tSV_{g,1}- K_{g}.
\]
\end{lemma}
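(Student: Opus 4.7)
The plan is to translate the exact sequence of Lemma~\ref{lemma:exact_sequence} into a Poincar\'e series identity. Writing $C\subseteq \mathcal{K}_g$ for the image of the rightmost arrow and $\mathrm{cok}$ for its cokernel, additivity of Poincar\'e series along the exact sequence gives
\[
tK_{g+1} = tSV_{g,1} - \poin_C(t) = tSV_{g,1} - K_g + \poin_{\mathrm{cok}}(t),
\]
so the recurrence reduces to establishing $\poin_{\mathrm{cok}}(t) = 1 + t^3$.

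To prove this, I would use the contracting homotopy $H$ from Lemma~\ref{lemma: acyclic} to produce preimages. Given $x\in \mathcal{K}_g$ of positive homological degree, the candidate is $(q,r) = (Hx, H\Delta Hx)$. The relation $\delta H + H\delta = \id$ on $\XX_{>0}$, together with $\delta x = 0$, gives $\delta q = x$. Since $\delta$ and $\Delta$ commute and $\Delta x = 0$, the element $\Delta Hx$ lies in $\ker\delta$, and whenever it has positive polynomial degree one obtains $\delta r = \Delta Hx = \Delta q$. Finally, the commutation identity $\Delta H\Delta H \propto H\Delta H\Delta$ noted in Lemma~\ref{lemma: acyclic} gives $\Delta r = \Delta H\Delta Hx = 0$, using $\Delta x = 0$. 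Thus $(q,r)\in \mathcal{V}(g,1)$ and maps to $x$ under the rightmost arrow.

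This construction fails in exactly two circumstances. If $x$ is a constant, i.e., of homological degree $0$, then $Hx=0$ and the strategy does not apply; and indeed $1\in \mathcal{K}_g$ is plainly not of the form $\delta q$, contributing one dimension in degree $0$ of the cokernel. The other failure is when $\Delta Hx$ is a nonzero constant, which by degree reasons can happen only if $x$ has degree $3$. I would show that $\phi(x):=\Delta Hx$ defines a linear functional $\mathcal{K}_g^{(3)}\to \mathbb{Q}$ whose kernel is exactly $C\cap \mathcal{K}_g^{(3)}$. If $\phi(x)=0$ then $(Hx,0)\in \mathcal{V}(g,1)$ maps to $x$; conversely, if $x = \delta q$ with $(q,r)\in \mathcal{V}(g,1)$ of degree $2$, then $r=0$ by degree and hence $\Delta q = 0$, while a similar degree argument shows $\Delta Hq = 0$, so $\phi(\delta q) = \Delta q - \delta \Delta Hq = 0$. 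For $g\ge 1$, surjectivity of $\phi$ is witnessed by $\phi(\tilde a_1b_1 - a_1\tilde b_1) = \phi(\delta(a_1b_1)) = \Delta(a_1b_1) = 1$.

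Assembling these contributions shows that the cokernel is one-dimensional in each of degrees $0$ and $3$ and trivial in all other degrees, so $\poin_{\mathrm{cok}}(t) = 1 + t^3$ and the claimed recurrence follows. The principal technical hurdle is the vanishing $\Delta r = 0$, which rests essentially on the somewhat delicate proportionality between $\Delta H\Delta H$ and $H\Delta H\Delta$ recorded in Lemma~\ref{lemma: acyclic}; without it, the natural choice $r = H\Delta Hx$ would typically not lie in $\ker \Delta$.
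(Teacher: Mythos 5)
Your argument is correct and follows essentially the same route as the paper. The central step---producing preimages of $x\in\mathcal{K}_g$ in degree $\ge 4$ via the pair $(Hx, H\Delta Hx)$, verified using $\delta H + H\delta = \id$ on $\XX_{>0}$ and the commutation relation between $\Delta H\Delta H$ and $H\Delta H\Delta$ from Lemma~\ref{lemma: acyclic}---is identical to the paper's. The only cosmetic difference is in degree $3$: the paper passes to the quotient $\mathcal{K}_g/(\tilde a_1b_1 - a_1\tilde b_1)$ and checks that the modified sequence is exact and the map to the quotient is onto, whereas you isolate the obstruction as a linear functional $\phi = \Delta H(-)$ on $\mathcal{K}_g^{(3)}$ and show $\ker\phi$ equals the image; these are dual formulations of the same fact, and yours is arguably a cleaner way to see that the cokernel in degree $3$ is one-dimensional. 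One small caveat, which the paper also elides: the witness $\tilde a_1 b_1 - a_1\tilde b_1$ for surjectivity of $\phi$ (equivalently, for the nontriviality of the quotient) only exists for $g\ge 1$, so the cokernel has Poincar\'e series $1+t^3$ only in that range; for $g=0$ it is just $1$, and the recurrence as stated fails (one has $K_1 = SV_{0,1}$, not $t^2 + SV_{0,1}$). Since the paper only ever applies the recurrence for $g\ge 1$ and uses $K_1 = SV_{0,1}$ directly as the base case, nothing downstream is affected, but your write-up, like the paper's, should restrict to $g\ge 1$.
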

\begin{proof}
In light of Lemma \ref{lemma:exact_sequence}, the bulk of the lemma will be established after showing that the map \[\mathcal{V}(g,1)[1]\xrightarrow{(q,r)\mapsto \delta q}\mathcal{K}_g\] is surjective in degrees 4 and higher. For this, we recall from Lemma~\ref{lemma: acyclic} that there is a cochain nullhomotopy $H$ of $(\XX_{>0},\delta)$ such that $\Delta H\Delta H$ and $H\Delta H\Delta$ differ by an invertible linear transformation. Now, given $x\in \mathcal{K}_g$ of degree at least 4, the pair $(Hx,H\Delta Hx)$ lies in $\mathcal{V}(g,1).$ Indeed, both $x$ and $\Delta Hx$ lie in $\XX_{>0}$ by our assumption on the degree of $x$, so we have \begin{align*}\delta H\Delta Hx&=\Delta Hx - H\delta\Delta Hx\\
&=\Delta Hx - H\Delta\delta Hx\\
&=\Delta Hx - H\Delta x+H\Delta H\delta x\\
&=\Delta Hx,
\end{align*} and \[\Delta x=0\implies H\Delta H\Delta x=0\implies \Delta H\Delta Hx=0.\] Since we also have \[\delta Hx=x - H\delta x=x,\] we conclude that $x$ lies in the image of the indicated map, establishing surjectivity. This in turn implies that the rightmost map in the exact sequence of Lemma \ref{lemma:exact_sequence} is surjective, which proves the claimed equality of Poincar\'{e} series above degree 3. 

From Lemma~\ref{lemma: acyclic}, it follows $\mathcal{K}_g$ is spanned in degree 2 by the elements of the form $\tilde c_i$, so that $(c_i,0)\in \mathcal{V}(g,1)$ is a preimage and we have surjectivity in this degree as well. Thus, the claimed equality holds in degree 2. Since the kernel $\mathcal{K}_g$ vanishes in degree $1$ and is one dimensional in degree $0$, it remains to establish the equality in degree 3.

For this, we make use of the modified sequence \[0\longrightarrow\mathcal{K}_{g+1}[1]\longrightarrow \mathcal{S}\otimes\mathcal{V}(g,1)[1]\longrightarrow\mathcal{K}_g/(\tilde a_1 b_1- a_1\tilde b_1).\] If $(q,r)$ is degree $2$ in $\mathcal{V}(g,1)$ and $\delta q=\tilde a_1 b_1- a_1\tilde b_1$, then $q=a_1b_1$ up to the kernel of $\delta$. In degree $2$ this kernel is spanned by $\tilde{a}_i$ and $\tilde{b}_i$, which are also in the kernel of $\Delta$. Then $\delta r = \Delta q = \Delta(a_1b_1)=1$, which is impossible, and we conclude that this modified sequence is still exact. Thus, it suffices to show surjectivity in degree 3 of the map \[\mathcal{V}(g,1)[1]\xrightarrow{(q,r)\mapsto \delta q}\mathcal{K}_g/(\tilde a_1 b_1- a_1\tilde b_1). \] But it is easy to see that, in degree 3, $\mathcal{K}_g=\ker \delta$ is spanned by elements of the form $\tilde c_ic_j-c_i\tilde c_j$, so that the quotient is spanned by classes of the form $\tilde c_ic_j-c_i\tilde c_j$ with $i\neq j$ and $\tilde a_ib_i-a_i\tilde b_i-\tilde a_1b_1+a_1\tilde b_1$. The proof is completed by noting that $(c_ic_j,0)\in \mathcal{V}(g,1)$ is a preimage in the former case and $(a_ib_i-a_1b_1,0)\in \mathcal{V}(g,1)$ is a preimage in the latter case.
\end{proof}

The final missing ingredient is an explicit description of $V_{g,1}$, which we again obtain recursively.

\begin{lemma}\label{lemma:Vrecurrence}
The Poincar\'e series $V_{g,n}$ satisfies the recurrence relation: 
\[
V_{g+1,n} = \xys{}(
V_{g,n+1}+
2tV_{g,n}+
t^2V_{g,n-1})
\]
for $n\ge 1$ and $g\ge 0$.
\end{lemma}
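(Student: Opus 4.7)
The plan is to decompose elements of $\XX_{g+1}$ according to the exterior factor $\Lambda[a_{g+1}, b_{g+1}]$, translate the defining conditions of $\mathcal{V}(g+1,n)$ into component equations, and then exhibit an $\mathcal{S}'$-linear isomorphism identifying $\mathcal{V}(g+1,n)$ with a direct sum of copies of $\mathcal{S}' \otimes \mathcal{V}(g,*)$ for suitable $*$, from which the recurrence follows by taking Poincar\'e series.

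First I would use the factorization $\XX_{g+1} \cong \XX_g \otimes \mathcal{S}' \otimes \Lambda[a_{g+1}, b_{g+1}]$, where $\mathcal{S}' = \fieldk[\tilde a_{g+1}, \tilde b_{g+1}]$ is a new copy of $\mathcal{S}$, to write each element uniquely as $q_{00} + q_{10}a_{g+1} + q_{01}b_{g+1} + q_{11}a_{g+1}b_{g+1}$ with components in $\XX_g \otimes \mathcal{S}'$ (and similarly for $r$). The operators split as $\Delta_{g+1} = \Delta_g + \Delta'$ with $\Delta' = \partial_{b_{g+1}}\partial_{a_{g+1}}$, and $\delta_{g+1} = \delta_g + \delta'$ with $\delta' = \tilde a_{g+1}\partial_{a_{g+1}} + \tilde b_{g+1}\partial_{b_{g+1}}$. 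Since $(\Delta')^2 = 0$ and $\Delta_g$ commutes with $\Delta'$, the binomial expansion collapses to $\Delta_{g+1}^n = \Delta_g^n + n\Delta_g^{n-1}\Delta'$, which drastically simplifies matters. Writing out $\Delta_{g+1}^n r = 0$ and $\Delta_{g+1}^n q = \delta_{g+1}\Delta_{g+1}^{n-1}r$ in components yields a system that couples the $00$-component to the $11$-component via the $\Delta'$-terms (e.g.\ $\Delta_g^n r_{00} + \epsilon n \Delta_g^{n-1}r_{11} = 0$) and that couples the $00, 10, 01$-components to each other and to $r_{11}$ via the $\delta'$-terms.

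The main technical step, and the principal obstacle I expect, is to produce an explicit change of variables that decouples these component equations into the direct sum
\[
\mathcal{V}(g+1, n) \cong \mathcal{S}' \otimes \bigl(\mathcal{V}(g, n+1) \oplus a_{g+1}\mathcal{V}(g, n) \oplus b_{g+1}\mathcal{V}(g, n) \oplus a_{g+1}b_{g+1}\mathcal{V}(g, n-1)\bigr).
\]
In the $10$- and $01$-components, an adjustment of the form $q_{10} \mapsto q_{10} \pm \tilde b_{g+1} r_{00}$ (and symmetrically for $q_{01}$) absorbs the $\delta'$-coupling, yielding directly a pair satisfying the defining equations of $\mathcal{V}(g,n)$; an analogous correction involving $q_{11}$ and $r_{11}$ decouples the $00$- and $11$-blocks. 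The shift $n \to n+1$ in the $00$-block arises because combining $\Delta_g^n r_{00} + \epsilon n \Delta_g^{n-1}r_{11} = 0$ with $\Delta_g^n r_{11} = 0$ forces $\Delta_g^{n+1} r_{00} = 0$, so that the corrected $(q_{00},r_{00})$ satisfies a $\Delta_g^{n+1}$-constraint; the shift $n \to n-1$ in the $11$-block arises from the consistency conditions that the $10, 01, 00$-equations impose on $r_{11}$, which effectively restrict $r_{11}$ from $\ker\Delta_g^n$ to $\ker\Delta_g^{n-1}$. Tracking the signs and the numerical factors (such as the $n$ in $n\Delta_g^{n-1}\Delta'$, which produces rescaling factors absorbed into the changes of variables) is the delicate part; that the resulting linear map is a well-defined isomorphism should be verifiable by explicitly writing down its inverse componentwise. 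Once the isomorphism is established, the recurrence follows by taking Poincar\'e series, with the factors $1, t, t, t^2$ supplied by the degrees of $1, a_{g+1}, b_{g+1}, a_{g+1}b_{g+1}$ in $\Lambda[a_{g+1}, b_{g+1}]$.
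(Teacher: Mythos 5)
This is essentially the paper's own argument: the paper likewise splits off the new handle's variables, writes the defining conditions of $\mathcal{V}(g+1,n)$ componentwise using $\Delta_{g+1}^n=\Delta_g^n+n\Delta_g^{n-1}\partial_b\partial_a$, and decouples the resulting system by $n$-dependent changes of variables (e.g.\ $q_3+\tfrac{1}{n}r_1$ and $nq_4+\Delta q_1-\tfrac{\delta}{n}r_1-2r_2$) into copies of $\mathcal{V}(g,n+1)$, $\mathcal{V}(g,n)^{\oplus 2}$, and $\mathcal{V}(g,n-1)$, exactly as you propose, with the $1/n$-type rescalings you flag and with the $n=1$ degeneration matching the convention $\mathcal{V}(g,0)=0$. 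The only real difference is bookkeeping: the paper strips the powers of $\tilde a,\tilde b$ into a bigrading with symmetrized odd components, so its degree count comes out as $S\left(V_{g,n+1}+t^3V_{g,n}+t^2V_{g,n-1}\right)+(S-1)t^{-1}V_{g,n}$ and needs the identity $\tfrac{S-1}{S}=2t^2-t^4$, whereas your decomposition yields the factors $1,t,t,t^2$ directly.
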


The proof of this recurrence is also deferred to the end of the section.

\begin{corollary}\label{cor:Vsolution}
For $g,n\ge 0$, we have
\[
V_{g,n} =
\xys{}^g(1+t^3)\left(
\sum_{j=0}^{g+n-1} t^j\left(\binom{2g}{j}-\binom{2g}{j-2n}
\right)
\right).
\]
\end{corollary}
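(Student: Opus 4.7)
The plan is to proceed by induction on $g$, using Lemma~\ref{lemma:Vrecurrence} for the inductive step and checking the base case $g=0$ by hand. Since the recurrence is linear, the task reduces to a bookkeeping exercise with binomial coefficients, so I expect no conceptual obstacle---only the need to be careful about index ranges.

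For the base case $g=0$, note that $\XX_0 = \mathbb{Q}$ and both $\Delta$ and $\delta$ vanish identically for lack of variables $a_i, b_i$. Thus for $n\geq 1$ the conditions $\Delta^n q = \delta\Delta^{n-1}r$ and $\Delta^n r=0$ are automatic, so $\mathcal{V}(0,n)=\XX_0\oplus \XX_0[3]$ has Poincar\'e series $1+t^3$. The proposed formula in this case collapses to $(1+t^3)\sum_{j=0}^{n-1}t^j(\binom{0}{j}-\binom{0}{j-2n})=(1+t^3)$, matching. The convention $V_{g,0}=0$ also matches the formula, since for $n=0$ the two binomials inside the sum coincide.

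For the inductive step, assume the formula holds for $V_{g,n}$ for all $n\geq 0$, and apply Lemma~\ref{lemma:Vrecurrence}. Multiplying the formulas for $V_{g,n+1}$, $2tV_{g,n}$, and $t^2V_{g,n-1}$ through, each sum may be extended to run over $j=0,\ldots,g+n$ using the vanishing of $\binom{2g}{j}$ for $j<0$. After this extension, the coefficient of $t^j$ in the bracket $V_{g,n+1}+2tV_{g,n}+t^2V_{g,n-1}$, divided by $S^g(1+t^3)$, becomes
\[
\Big(\binom{2g}{j}+2\binom{2g}{j-1}+\binom{2g}{j-2}\Big)-\Big(\binom{2g}{j-2n-2}+2\binom{2g}{j-2n-1}+\binom{2g}{j-2n}\Big).
\]
Two applications of Pascal's rule collapse each parenthesis to $\binom{2g+2}{j}$ and $\binom{2g+2}{j-2n}$ respectively, and multiplying by the extra factor $S$ from the recurrence produces exactly the claimed expression for $V_{g+1,n}$ with upper index $(g+1)+n-1 = g+n$.

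The only subtlety to verify carefully is that the upper bound $g+n$ in the combined sum is the correct one: the individual sums have bounds $g+n$, $g+n-1$, $g+n-2$, but the factors $t$ and $t^2$ shift each upward to $g+n$, so no cancellation of higher terms is needed. The only step requiring any actual thought is confirming that the truncation at $g+n$ on the right-hand side of the recurrence matches the truncation at $(g+1)+n-1$ on the left; everything else is a double application of Pascal's identity.
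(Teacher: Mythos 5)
Your proof is correct. Let me note how it differs from the paper's.

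The paper also works from Lemma~\ref{lemma:Vrecurrence}, but instead of a step-by-step induction it first observes that the recurrence can be formally extended to nonpositive $n$ by declaring $V_{g,-n}=-t^{-2n}V_{g,n}$, then unwinds all $g$ applications at once to write
\[
V_{g,n} = S^g\sum_{j=0}^{2g} t^j \binom{2g}{j}\,V_{0,n+g-j},
\]
and finally substitutes $V_{0,m}=1+t^3$ for $m>0$ together with the formal extension for $m\le 0$, obtaining after a change of index the stated sum. Both arguments are essentially bookkeeping on the same recurrence --- the appearance of $\binom{2g}{j}$ in the paper's formula comes from $(1+t)^{2g}=(1+2t+t^2)^g$, which is the $g$-fold compounding of the same two applications of Pascal's rule you perform one step at a time.

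The practical difference is that your induction avoids the formal extension to negative $n$ altogether: since you only ever encounter binomial coefficients with negative lower index (which vanish by the standard convention), you never need to verify that the ad hoc rule $V_{g,-n}=-t^{-2n}V_{g,n}$ is consistent with the recurrence, a claim the paper asserts but does not spell out. Your approach pays for this with a slightly more tedious index-range check at each step, but you handle that correctly: the only extensions of the lower summation bound introduce terms $\binom{2g}{j-1}$, $\binom{2g}{j-2}$, $\binom{2g}{j-2n-1}$, and $\binom{2g}{1-2n}$ at $j\in\{0,1\}$, all of which vanish, and the three shifted upper bounds all land at $g+n$, matching the target index $(g+1)+n-1$. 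The base case $g=0$ and the $n=0$ edge case (where the recurrence does not apply but both sides are $0$) are also handled properly. This is a clean alternative proof.
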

\begin{proof} Note that the recursion of Lemma~\ref{lemma:Vrecurrence} may be extended to nonpositive $n$ if we formally set $V_{g,-n}=-t^{-2n}V_{g,n}$. Using the recursion, we write
\[
V_{g,n} = \xys{}^g\sum_{j=0}^{2g} t^j \binom{2g}{j}V_{0,n+g-j}
.\]
Next we note that $V_{0,n}=1+t^3$ for $n>0$ and thus $V_{g,n}$ can be calculated as 
\[
V_{g,n} =\xys{}^g\left(\sum_{j=0}^{g+n-1}(1+t^3) t^j\binom{2g}{j} - \sum_{j=g+n+1}^{2g} (1+t^3)t^{2g+2n-j}\binom{2g}{2g-j}\right).\]

Change of index yields the result.
\end{proof}

This expression admits a convenient simplification when the genus is high relative to the degree.

\begin{corollary}\label{cor: stable value of Vg1}
There is the congruence
\[
V_{g,1}\equiv \frac{(1+t^3)(1-t^2)}{(1-t)^{2g}}\pmod{t^{g+2}}.
\]
\end{corollary}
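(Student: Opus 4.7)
My plan is to directly extract the congruence from the closed-form expression in Corollary~\ref{cor:Vsolution}, evaluated at $n=1$. Specifically, setting $n=1$ yields
\[
V_{g,1} = \frac{1+t^3}{(1-t^2)^{2g}}\cdot P(t), \qquad P(t) := \sum_{j=0}^{g} t^j\left(\binom{2g}{j}-\binom{2g}{j-2}\right),
\]
using that $S = 1/(1-t^2)^2$ since $\mathcal{S} = \mathbb{Q}[\tilde a,\tilde b]$ with $|\tilde a|=|\tilde b|=2$. The target series $(1+t^3)(1-t^2)/(1-t)^{2g}$ differs from this only in the polynomial factor, so the task reduces to comparing $P(t)$ to a more natural polynomial.

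The natural comparison is with $Q(t) := (1-t^2)(1+t)^{2g}$, since a direct expansion gives
\[
Q(t) = \sum_{j\ge 0}\left(\binom{2g}{j}-\binom{2g}{j-2}\right)t^j,
\]
so $P(t)$ is simply the truncation of $Q(t)$ below degree $g+1$. Thus $P(t)-Q(t)$ is supported in degrees $\ge g+1$, and its coefficient at $t^{g+1}$ is $-(\binom{2g}{g+1}-\binom{2g}{g-1})$, which vanishes by the symmetry $\binom{2g}{g+1}=\binom{2g}{g-1}$. Hence $P(t)\equiv Q(t)\pmod{t^{g+2}}$.

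Since $(1+t^3)/(1-t^2)^{2g}$ is an honest power series in $t$, multiplying both sides of this congruence by it preserves the congruence modulo $t^{g+2}$. Finally, using $(1-t^2)^{2g}=(1-t)^{2g}(1+t)^{2g}$ I get
\[
\frac{(1+t^3)Q(t)}{(1-t^2)^{2g}} = \frac{(1+t^3)(1-t^2)(1+t)^{2g}}{(1-t)^{2g}(1+t)^{2g}} = \frac{(1+t^3)(1-t^2)}{(1-t)^{2g}},
\]
which yields the asserted congruence. The only step requiring any insight is the observation that the binomial coefficient difference at $j=g+1$ vanishes by the symmetry of Pascal's triangle about the middle column, and this is an entirely mechanical verification; I do not anticipate any substantive obstacle.
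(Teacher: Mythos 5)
Your proof is correct and takes essentially the same route as the paper: both specialize Corollary~\ref{cor:Vsolution} at $n=1$, replace the truncated sum by the full generating function $(1-t^2)(1+t)^{2g}$, and justify the replacement modulo $t^{g+2}$ by noting that the first omitted coefficient $\binom{2g}{g+1}-\binom{2g}{g-1}$ vanishes by symmetry. The only difference is cosmetic: the paper writes the extension to an infinite sum directly, whereas you phrase it as a comparison of the polynomial $P$ with $Q$; the content is identical.
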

\begin{proof}
Modulo $t^{g+2}$, we have
\begin{align*}
V_{g,1}&\equiv
S^g(1+t^3)\left(\sum_{j=0}^\infty t^j\left(\binom{2g}{j}-\binom{2g}{j-2}\right)\right)
\\
&\equiv\frac{(1+t^3)(1-t^2)(1+t)^{2g}}{(1-t^2)^{2g}}\pmod{t^{g+2}},
\end{align*}
as desired (note that for the $t^{g+1}$ coefficient, $\binom{2g}{g+1}-\binom{2g}{g-1}$ is identically zero).
\end{proof}

Finally, we are ready to calculate the coefficients of the Poincar\'e series $K_g$ explicitly.
\begin{proof}[Proof of Theorem~\ref{thm: Kg calculation}]

To prove (\ref{item:Kg dimensions}), we use the recurrence relation 
of Lemma~\ref{lemma: good recurrence} repeatedly, picking up factors of $1+t^3$ and $tSV_{i,1}$:
\begin{align*}
K_g &= \frac{1+t^3}{t}+ SV_{g-1,1} -\frac{K_{g-1}}{t}
\\
&=
\frac{1+t^3}{t}+ SV_{g-1,1} - \frac{1+t^3}{t^2} - \frac{SV_{g-2,1}}{t}+\frac{K_{g-2}}{t^2}
\\&=\cdots
\end{align*}
Ending this expansion with $K_1=SV_{0,1}$, we can combine the $1+t^3$ terms and achieve
\[
K_{g}=(1-t+t^2)(1 -(-t)^{1-g})+\sum_{j=0}^{g-1}\frac{SV_{j,1}}{(-t)^{g-1-j}}
\]
valid for $g>0$.

The first term has order at most $t^2$, so for $i\ge 3$,
$\dim_i(\mathcal{K}_g)$ is the degree $i$ term of 
\[
\sum_{j=0}^{g-1}\frac{SV_{j,1}}{(-t)^{g-1-j}}.
\]
Substituting the formula from Corollary~\ref{cor:Vsolution}, this is the degree $i$ term of
\[
\sum_{j=0}^{g-1}\sum_{m=0}^j(-1)^{g+1+j}S^{j+1}(1+t^3)t^{j+m+1-g}\left(\binom{2j}{m}-\binom{2j}{m-2}\right).
\]
For a given choice of degree $i$, the contribution from the $(j,m)$ summand is then $(-1)^{g+1+j}\left(\binom{2j}{m}-\binom{2j}{m-2}\right)$ times the degree $i+g-j-m-1$ coefficient of $(1+t^3)S^{j+1}$. The terms of the Poincar\'e series $S^{j+1}$ are familiar from Section~\ref{sec: non-orientable and open surfaces}. To wit, we have
\[
S^{j+1}=\sum_{i\geq0\,\,\mathrm{ even}}
\binom{2j+\frac{i}{2}+1}{2j+1}t^i
 \] 
so that 
\begin{align*}
(1+t^3)S^{j+1}
&=\sum_{i\geq0\,\,\mathrm{even}}\binom{2j+\frac{i}{2}+1}{2j+1}t^i+\sum_{i\geq0\,\,\mathrm{odd}}\binom{2j+\frac{i-3}{2}+1}{2j+1}t^i
\\
&=\sum_{i\geq0}\binom{2j+\frac{i}{2}+\frac{1}{4} + \frac{3(-1)^{i}}{4}}{2j+1}t^i.
\end{align*} We then have the following equation, valid for $g>0$ and $i\ge 3$:
\begin{align*}
\dim_i(\mathcal{K}_g)=&
\sum_{j=0}^{g-1}\sum_{m=0}^j
(-1)^{g+j+1}\left(\binom{2j}{m}-\binom{2j}{m-2}\right)
\\
&\qquad\qquad\left(\binom{\frac{1}{2}\left(3j+i+g-m-\frac{1}{2}-\frac{3(-1)^{i+j+g+m}}{2}\right)}{2j+1}\right),
\end{align*}
which yields (\ref{item:Kg dimensions}) by a direct computation of the individual summands. The special cases for $i\le 2$ can be seen by the formula further above or by inspection. For the special case $g=0$, the space $\mathcal{K}_g$ is one dimensional, concentrated in degree $0$, which agrees with the formulas of the theorem since the sum for $i\ge 3$ is empty for $g=0$.

For (\ref{item:high g Kg dimensions}), we proceed by induction on $i$, using the recurrence relation of Lemma~\ref{lemma: good recurrence} in the following form:
\begin{align*}
K_g = 1+t^3 + tSV_{g,1} - tK_{g+1}
\end{align*}
For the base case, according to Theorem~\ref{thm: Kg calculation}(\ref{item:Kg dimensions}), the degree $2$ coefficient of $K_{g+1}$ is $2g+2$. Then to determine the degree $3$ coefficient of $K_g$ we need only determine the degree $2$ coefficients of $SV_{g,1}$.  By Corollary~\ref{cor:Vsolution}, modulo $t^3$, $SV_{g,1}$ is
\[
(1+2t^2)^{g+1}\left(
\binom{2g}{0} + \binom{2g}{1}t + \left(\binom{2g}{2}-\binom{2g}{0}\right)t^2
\right)  
\]  
and so the degree $2$ coefficient of $SV_{g,1}$ is
\[
2(g+1)+ \binom{2g}{2}-1.
\]
so the overall $\dim_3(\mathcal{K}_g)$ is 
\[
\dim_3(\mathcal{K}_g)=1+2(g+1)+\binom{2g}{2}-1 - 2g-2 = \binom{2g}{2},
\]
as desired.

We move on to the inductive step. We will prove the statement for $i\le g+2$, assuming it for lower values of $i$ and all appropriate $g$. By Corollary~\ref{cor: stable value of Vg1}, 
\[
SV_{g,1}\equiv\frac{(1+t^3)(1-t^2)}{(1-t)^{2g}(1-t^2)^2}\pmod{t^i}
\] so
\begin{align*}
tSV_{g,1}&\equiv\frac{t(1+t^3)(1-t^2)}{(1-t)^{2g}(1-t^2)^2}\pmod{t^{i+1}}
\\
&\equiv
\frac{t}{(1-t)^{2g-1}}+\frac{t^2}{(1-t)^{2g+1}}\pmod{t^{i+1}}
\\&\equiv
\left(\sum_{j\geq0}\bigg[\binom{2g+j-3}{2g-2}+\binom{2g+j-2}{2g}\bigg]t^j\right) \pmod{t^{i+1}}
\end{align*}
where the last step uses the expansion of $\frac{1}{(1-t)^n}$ from Section~\ref{sec: non-orientable and open surfaces}. Thus, the coefficient of $t^i$ in this series is $\binom{2g+i-3}{2g-2}-\binom{2g+i-2}{2g}$, and combining this with the inductive premise (for $i-1$ and $g+1$) and the recurrence relation yields the result directly.
\end{proof}

Our last act will be to supply the missing proofs of Lemmas \ref{lemma:exact_sequence} and \ref{lemma:Vrecurrence}, both of which follow the same basic syntax, which we now pause to elucidate.

We fix $g\geq0$ and use the letters $q$ and $r$ to denote generic homogeneous elements of $\XX_{g+1}$, which we polarize according to the decomposition $\XX_{g+1}\cong \Lambda[a,b]\otimes\mathbb{Q}[\tilde a,\tilde b]\otimes\XX_g$ by writing

\[
\def\arraystretch{1.5}
\begin{array}{rrrrl}
q&=&&\tilde{a}^i\tilde{b}^j &q_{1,ij}\\
&+&a&\tilde{a}^{i-1}\tilde{b}^j &(q_{2,ij} - q_{3,ij})\\
&+&b&\tilde{a}^i\tilde{b}^{j-1} &(q_{2,ij } + q_{3,ij})\\
&+&ab&\tilde{a}^{i-1}\tilde{b}^{j-1} &q_{4,ij},
\end{array}\] where the $q_{kij}$ contain no factors of $a$, $b$, $\tilde a$, or $\tilde b$ and we implicitly sum over $(i,j)$. This equation defines $q_{1,ij}$ for nonnegative $i$ and $j$, $q_{2,ij}$ and $q_{3,ij}$ when at least one of $i$ and $j$ is positive, and $q_{4,ij}$ when both $i$ and $j$ are positive, and we extend to arbitrary $(i,j)$ by declaring all other cases to be zero. Note that $q_{2,i,0}=-q_{3,i,0}$ and that $q_{2,0,j}=q_{3,0,j}$.

The index $i$ counts the ``total power of $a$'' in $q$, disregarding the distinction between $a$ and $\tilde a$---likewise for $j$ and $b$---and it is evident that $\delta$ preserves both $i$ and $j$ while $\Delta$ does not. In what follows, we will apply operators $\phi_{m,n}\coloneqq\delta^m\Delta^n$ to $q$ and $r$ and impose conditions like $\phi_{m,n} q=0$ or $\phi_{m,n}q=\phi_{m',n'}r$ to obtain an infinite sequence of relations among the $q_{kij}$ and $r_{kij}$. Because $\Delta$ does not preserve $i$ and $j$, these relations will mix different values of $i$ and $j$. In order to obtain homogeneous relations, we work in the bigrading $|q_{1,ij}|=|q_{2,ij}|=(i,j)$ and $|q_{3,ij}|=|q_{4,ij}|=(i-1,j-1)$. With this bigrading in place, we will obtain a system of equations valid for each pair $(i,j)$, and so we will suppress the indices. 

As an example, we consider imposing the condition $\Delta(q)=0$, which implies that $\Delta(q_{1,ij})+q_{4,i+1,j+1}=0$ for each $(i,j)$. With our bigrading, this equation is homogeneous of degree $(i,j)$, and we abbreviate it as $$\Delta(q_1)+q_4=0.$$

There is an important special case in bidegree $(0,0)$, where $q_2$ is undefined, which we handle separately. Other potential special cases occur in bigrading $(-1,*)$ and $(*,-1)$, with $*\ge 0$, where only $q_3$ is defined. In these cases, since $q_{3,ij}=\pm q_{2,ij}$ if $i$ or $j$ is zero, the value of $q_3$ in this lowest degree is determined by $q_2$ in the next higher bidegree, and one might worry about interaction between the constraints on these two variables. Fortunately, our equations will contain pairs of the form
\begin{align*}
\phi_{m,n}q_2 &=0 \\
\phi_{m,n}q_3 &= \alpha q_4
\end{align*}
for some constant $\alpha$, so that, since $q_4=0$ in this lowest bidegree, the two constraints are identical and there is no special case.

Without further ado, we move to the proofs.

\begin{lemma}\label{lemma: K and V relationship}
If $q$ lies in $\mathcal{K}_{g+1}$, then $(q_{1,ij},-q_{3,i+1,j+1})\in \mathcal{V}(g,1)$ for each $(i,j)$, and $\delta q_{1,00}=0$. Conversely, every collection $\{q_{1,ij}, q_{3,i+1,j+1}\}$ satisfying these conditions specifies a unique element of $\mathcal{K}_{g+1}$.
\end{lemma}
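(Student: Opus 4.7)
The plan is to compute $\delta q$ and $\Delta q$ explicitly in terms of the polarized components, using the splittings $\delta = \delta_g + \tilde a \partial_a + \tilde b \partial_b$ and $\Delta = \Delta_g + \partial_b \partial_a$, and then match the resulting system of equations with the defining conditions of $\mathcal{V}(g,1)$.

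First I would expand $\delta q$ by applying each summand to the four pieces of the polarization and reading off the coefficients of $\tilde a^i \tilde b^j$, $a\tilde a^{i-1}\tilde b^j$, $b\tilde a^i \tilde b^{j-1}$, and $ab\tilde a^{i-1}\tilde b^{j-1}$ in bidegree $(i,j)$. The equation $\delta q = 0$ should reduce to $\delta_g q_{1,ij} + 2 q_{2,ij} = 0$ (forcing $q_{2,ij} = -\tfrac{1}{2}\delta_g q_{1,ij}$), together with $q_{4,ij} = \delta_g q_{3,ij}$ appearing consistently from both the $a$- and $b$-coefficients, and the automatic consequence $\delta_g q_{4,ij}=0$. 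In the special bidegree $(0,0)$, where $q_{2,00}$ is undefined, the first relation degenerates to the supplementary condition $\delta_g q_{1,00} = 0$.

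An analogous expansion of $\Delta q = 0$ should produce $\Delta_g q_{1,ij} + q_{4,i+1,j+1} = 0$ and $\Delta_g q_{3,ij} = 0$, with relations on $q_{2,ij}$ and $q_{4,ij}$ that follow automatically from the preceding ones via $\delta_g^2 = 0$, $\Delta_g^2 = 0$, and $[\delta_g,\Delta_g]=0$. Substituting $q_{4,i+1,j+1} = \delta_g q_{3,i+1,j+1}$ yields $\Delta_g q_{1,ij} = \delta_g(-q_{3,i+1,j+1})$, which together with $\Delta_g(-q_{3,i+1,j+1}) = 0$ is precisely the membership criterion for $(q_{1,ij},-q_{3,i+1,j+1}) \in \mathcal{V}(g,1)$. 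This establishes the forward direction.

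For the converse and uniqueness, given a collection $\{(q_{1,ij},-q_{3,i+1,j+1})\} \subset \mathcal{V}(g,1)$ satisfying $\delta_g q_{1,00} = 0$, I would define $q_{2,ij} := -\tfrac{1}{2}\delta_g q_{1,ij}$ (the boundary identifications $q_{2,0,j} = q_{3,0,j}$ and $q_{2,i,0} = -q_{3,i,0}$ being consistent because $\delta_g q_{1,00}=0$) and $q_{4,ij} := \delta_g q_{3,ij}$, and reassemble $q \in \XX_{g+1}$. A direct recomputation of $\delta q$ and $\Delta q$ using the defining relations of $\mathcal{V}(g,1)$ then shows both vanish, while injectivity of $q \mapsto (q_{1,ij},q_{3,i+1,j+1})_{(i,j)}$ is immediate since the data together with the relations determines $q_2$ and $q_4$. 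The main obstacle is sign bookkeeping when commuting the odd operator $\delta_g$ past the exterior variables $a,b$: the $a$- and $b$-coefficient equations appear sign-discrepant but must reconcile into the single relation $q_{4,ij} = \delta_g q_{3,ij}$, and verifying this reconciliation (together with the implicit sign in $\partial_b(ab) = -a$) is the principal calculation.
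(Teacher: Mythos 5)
Your proposal is correct and follows essentially the same approach as the paper: polarize over the new variables, split $\delta = \delta_g + \tilde a\partial_a + \tilde b\partial_b$ and $\Delta = \Delta_g + \partial_b\partial_a$, read off the resulting coefficient equations, substitute out $q_2$ and $q_4$, and recognize the residual system as the membership criterion for $\mathcal{V}(g,1)$ together with the extra condition $\delta q_{1,00}=0$ at bidegree $(0,0)$. One small inaccuracy worth flagging: the $a$- and $b$-coefficients of $\delta q = 0$ do not each independently produce $q_4 = \delta_g q_3$; they give two distinct relations, roughly $\delta_g(q_2 - q_3) + q_4 = 0$ and $\delta_g(q_2 + q_3) - q_4 = 0$, whose sum and difference yield $\delta_g q_2 = 0$ (redundant once $q_2 = -\tfrac{1}{2}\delta_g q_1$) and $q_4 = \delta_g q_3$ -- so the ``reconciliation'' is a linear recombination rather than a coincidence of identical equations, and the consistency at the boundary bidegrees $(0,j)$ and $(i,0)$ is handled by the vanishing of $q_4$ there rather than by $\delta_g q_{1,00}=0$.
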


\begin{proof}
The equations $\Delta q = \delta q = 0$ yield:
\begin{align*}
\Delta q_1 + q_4 &= 0\\
\Delta q_2&=0
\\
\Delta q_3&=0
\\
\Delta q_4&=0
\\
\delta q_1 + 2q_2&= 0
\\\delta q_2 & =  0
\\-\delta q_3 + q_4&=0
\\\delta q_4 & =  0.
\end{align*}
The fifth and seventh equations each have a free variables for which we will substitute, reducing the number of degrees of freedom and the number of variables. The sixth and eight equation are redundant, implied by the other equations. Then reducing, we get
\begin{align*}
\Delta q_1 + \delta q_3 &= 0\\
\Delta q_3&=0
\\\delta \Delta q_1&=0\\
\delta\Delta q_3&=0.
\end{align*}
The third and fourth equations are now redundant. Thus, in the general case, the equations are equivalent to $(q_1,-q_3)\in \mathcal{V}(g,1)$. In the special case $(i,j)=(0,0)$, we have the additional equation $\delta q_1=0$.

\end{proof}

\begin{proof}[Proof of Lemma \ref{lemma:exact_sequence}]
Lemma \ref{lemma: K and V relationship} grants that the correspondence \[q\mapsto \sum_{i,j}\tilde a^i\tilde b^j\otimes (q_{1,ij},-q_{3,i+1,j+1})\] defines a map $\mathcal{K}_{g+1}\to \mathcal{S}\otimes\mathcal{V}(g,1)$, and that this map is injective. For the righthand map, we take the composite \[\mathcal{S}\otimes \mathcal{V}(g,1)[1]\longrightarrow \mathcal{V}(g,1)[1]\xrightarrow{(q,r)\mapsto \delta q}\mathcal{K}_g,\] where the first map is induced by the augmentation of $\mathcal{S}$. Lemma \ref{lemma: K and V relationship} implies that the kernel of this map is precisely the image of $\mathcal{K}_{g+1}$.
\end{proof}

\begin{proof}[Proof of Lemma \ref{lemma:Vrecurrence}]
By definition, $(q,r)\in \mathcal{V}(g+1,n)$ if and only if $\Delta^n q = \delta\Delta^{n-1}r$ and $\Delta^n r = 0$. For $n>1$, these requirements are equivalent to the following system:
\begin{align*}
\Delta^n q_1 + n\Delta^{n-1} q_4 &= 
\delta\Delta^{n-1} r_1 + 2 \Delta^{n-1} r_2
+ (n-1)\delta \Delta^{n-2}r_4
\\
\Delta^n q_2&= -\delta\Delta^{n-1} r_2\\
\Delta^n q_3&= -\delta\Delta^{n-1} r_3 + \Delta^{n-1} r_4 \\
  \Delta^{n} q_4
&=
\delta\Delta^{n-1} r_4
\\
 \Delta^n r_1  + n\Delta^{n-1}r_4 &= 0\\
\Delta^n r_2&=0\\
\Delta^n r_3&=0\\
\Delta^{n} r_4&=0
\end{align*}
None of these equations are redundant. Rearranging, this system becomes the following:
\begin{align*}
(q_2,r_2)
&\in  \mathcal{V}(g,n)\\
(q_3+\frac{1}{n}r_1,-r_3)
&\in \mathcal{V}(g,n)\\
\Delta^{n-1}(\Delta r_1  + nr_4) &= 0\\
\Delta^{n+1} r_1&=0
\\
\Delta^n q_1 + n\Delta^{n-1} q_4 
&= 
\frac{1}{n}\delta\Delta^{n-1} r_1 + 2 \Delta^{n-1} r_2
+ \frac{n-1}{n}\delta \Delta^{n-2}(\Delta r_1 + nr_4)
\\
  \Delta^{n} q_4
&=
-\frac{1}{n}\delta\Delta^{n} r_1
\end{align*}
We rewrite the last four of these equations as
\begin{align*}
\left(nq_4 + \Delta q_1 -\frac{\delta}{n} r_1 - 2r_2,\frac{n-1}{n}(\Delta r_1+nr_4)\right)
&\in
\mathcal{V}(g,n-1)\\
\Delta^{n+1}r_1 &= 0
\\
\Delta^n\left(\Delta q_1 - \frac{\delta}{n} r_1 -2r_2\right)
&=
\delta\Delta^n r_1
\end{align*}

In summary, the original system is equivalent to the membership relations
\begin{align*}
(q_2,r_2)
&\in  \mathcal{V}(g,n)\\
\left(q_3+\frac{1}{n}r_1,-r_3\right)
&\in
\mathcal{V}(g,n)
\\
\left(nq_4 + \Delta q_1 -\frac{\delta}{n} r_1 - 2r_2,
\frac{n-1}{n}(\Delta r_1+nr_4)\right)
&\in
\mathcal{V}(g,n-1)\\
\left(q_1,\frac{n+1}{n}r_1\right)
&\in
\mathcal{V}(g,n+1)
\end{align*}
The degeneration when $(i,j)=(0,0)$ is identical except that the $(q_2,r_2)$ term does not exist. 

When $n=1$, the equations impose the relation that both sides of the third pair above, the pair that is supposed to be in $\mathcal{V}(g,n-1)$, are identically zero and the relations instead eliminate the variables $q_4$ and $r_4$. This matches our convention that $\mathcal{V}(g,0)=0$.

Examining the degrees of the pairs above yields the functional relation
\[
V_{g+1,n} = \xys{}(
V_{g,n+1}+t^2V_{g,n-1})
+ \xys{}t^3 V_{g,n} + (\xys{}-1)t^{-1}V_{g,n},
\]
and applying the identity $\frac{\xys{}-1}{\xys{}}=2t^2-t^4$ yields
\[
V_{g+1,n} = \xys{}(
V_{g,n+1}+ 2t V_{g,n} + t^2V_{g,n-1})
\]
\end{proof}

\bibliographystyle{amsalpha} 
\bibliography{references-2016.bib}
\end{document}